\declaretheorem[style=definition,numberwithin=section]{definition}
\declaretheorem[style=plain,numberwithin=section,]{theorem}
\declaretheorem[style=remark,numberwithin=section]{remark}
\declaretheorem[style=plain,numberwithin=section]{lemma}
\declaretheorem[style=plain,numberwithin=section]{corollary}
\declaretheorem[numbered=no,name=Theorem A]{thma}
\declaretheorem[numbered=no,name=Theorem B]{thmb}
\declaretheorem[numbered=no,name=Assumption,refname={assumption su},Refname={assumption su}]{ass}
\declaretheorem[numbered=no,name=Q-Criterion]{qcrit}
\declaretheorem[numbered=no,name=Local Ergodic Theorem]{local}
\numberwithin{equation}{section}
\newcommand{\mylabel}[2]{#2\def\@currentlabel{#2}\label{#1}}
\begin{document}

\title[]{A conditional proof of the non-contraction property for $N$ falling balls}

\author{Michael Hofbauer-Tsiflakos}
\address{Michael Hofbauer-Tsiflakos}
\curraddr{UAB \\
University Hall 4037 \\
1402 10th Ave S\\
35294 Birmingham, AL}
\email{hofbauer@uab.edu}
\urladdr{https://people.cas.uab.edu/~hofbauer/}

\subjclass[2010]{Primary 37D50; Secondary 37J10}



\keywords{Ergodic Theory, Hyperbolic dynamical systems with singularities.}

\begin{abstract}
Wojtkowski's system of $N$, $N \geq 2$, falling balls is a nonuniformly hyperbolic smooth dynamical system with singularities. 
It is still an open question whether this system is ergodic. We contribute toward an affirmative answer, by proving 
the non-contraction property, conditioned by the assumption of strict unboundedness. For a certain mass ratio 
the configuration space can be unfolded to a billiard table where the daunting proper alignment condition is satisfied. 
We prove, that the aforementioned unfolded system with three degrees of freedom is ergodic.
\end{abstract}

\maketitle
\tableofcontents

\section{Introduction}

In \cite{W90a,W90b}, Maciej P. Wojtkowski introduced the system of $N$, $N \geq 2$, falling balls. It describes 
the motion of $N$ point masses moving up and down a vertical line, colliding with each other elastically and 
the lowest point mass collides with a rigid floor placed at height zero. The system has $N$ degrees of freedom, 
the positions $q_1,\ldots,q_N$ and the momenta $p_1,\ldots,p_N$. The point masses are placed on top of each 
satisfying $0 \leq q_1 \leq \ldots \leq q_N$. The overall standing assumption 
on the masses is $m_1 > \ldots > m_N$.
Movement occurs due to kinetic energy and a linear potential field on a compact energy surface $E_c$ 
given by the Hamiltonian $H(q,p) = \sum_{i = 1}^N p_i^2 / 2m_i + m_iq_i$. The dynamics are further reduced to 
the Poincar\'e section $\mathcal{M}$ containing the states right after a collision of two point masses or a 
collision of the lowest point mass with the floor. Accordingly, the Poincar\'e map $T$ describes the dynamics 
from one collision to the next. It preserves the smooth measure $\mu$, obtained from the symplectic volume form 
on $\mathbb{R}^N \times \mathbb{R}^N$ via symplectic reduction. Out of historic convenience we will refer 
to the falling point masses as falling balls. 

An intrinsic obstacle, which makes the treatment of this system challenging, is the presence of singular 
collisions. In physical terms, they occur in a triple collision or when the two lower balls hit the floor 
simultaneously. The singular collisions or singularities form codimension one submanifolds in phase space. 
The Poincar\'e map is not well defined on the singularities because it has two images. 

The main question in Wojtkowski's original paper \cite{W90a} revolved around the existence 
of non-zero Lyapunov exponents. Sim\'anyi settled the general case by proving that an arbitrary 
number of falling balls have non-zero Lyapunov exponents almost everywhere \cite{S96}. For a family 
of potential fields $V(q)$, satisfying $\partial V(q) / \partial q > 0$, 
$\partial^2 V(q) / \partial q^2 < 0$, Wojtkowski proved the same result in \cite{W90b}. 
The latter family of potentials does not include the linear potential field.

A new treatment, which ties in old and new ideas, can be found in Wojtkowski's latest work on falling balls \cite{W98}. He starts off with $N$, $N \geq 2$, horizontally aligned balls falling to a moving floor, establishes complete hyperbolicity, and then carries the result over to a variety of falling ball systems by applying stacking rules on the balls. In the most extreme 
case he obtains his original system from \cite{W90a}. The billiard system of each falling ball system 
corresponds to a particle falling in a particular wedge. The form of the wedge depends thereby on the masses and 
the physical model. 

The main line of this work concerns the long time open conjecture whether three (or more) falling balls are ergodic. 
There are two results, 
confirming the ergodicity of two falling balls with mass configurations $m_1 > m_2$: One for the linear 
potential mentioned above \cite{LW92} and one \cite{Ch91} for the family of potentials considered above with the relaxed 
assumption $\partial^2 V(q) / \partial q^2 \leq 0$ and the additional restrictions 
$0 < C_1 \leq \partial V(q) / \partial q \leq C_2 < \infty$, 
$0 \leq \left| \partial^2 V(q) / \partial q^2 \right| \leq C_3 < \infty$, for some constants 
$C_1, C_2, C_3 > 0$. 

Since our system satisfies the mild conditions of Katok-Strelcyn \cite{KS86}, 
the theory of the latter implies that the phase space decomposes into at most countably many ergodic 
components.\footnote{An ergodic component is a set of positive measure in phase space on which the conditional 
smooth measure is ergodic.} From here, it is common to verify the Local Ergodic Theorem 
(LET) together with a transitivity argument to prove the existence of only one ergodic component of full measure 
and, thus, the ergodicity of the system. The LET dates back to Sinai's seminal proof of ergodicity for two 
discs moving uniformly in the two dimensional torus \cite{S70} and was later generalized in the framework of 
semi-dispersing billiards \cite{ChS87,KSSz90,BChSzT02}. In order to prove ergodicity we will use the LET, 
formulated for symplectic maps by Liverani and Wojtkowski \cite{LW92}. 

The LET claims, that one can find an open neighbourhood of a hyperbolic point $p$ with sufficient expansion, which lies 
(mod 0) in one ergodic component, if the following five conditions are satisfied: 
\begin{enumerate}
\item[\ref{C1}] Regularity of singularity manifolds.
\item[\ref{C2}] Non-contraction property.
\item[\ref{C3}] Continuity of transversal Lagrangian subspaces.
\item[\ref{C4}] Chernov-Sinai ansatz.
\item[\ref{C5}] Proper alignment.
\end{enumerate}
Assuming the validity of the LET and the abundance of sufficiently expanding points,\footnote{The abundance 
of sufficiently expanding points is equivalent to saying that the set of sufficiently expanding points 
has measure one and is arcwise connected (cf. Subsection \ref{sec:ab}).} ensures that the neighbourhoods of the LET can be connected to one ergodic component of full measure. 

The bulk of effort in this paper consists in giving a conditional proof of the non-contraction property \ref{C2}. We will use 
coordinate transformations (\ref{hv}), (\ref{xieta}) for which the derivative of the flow equals the identity matrix. 
Hence, it is 
equivalent to verify the non-contraction property for the flow. The paramount advantage is that it is easier for 
us to express results in finite times rather than arbitrarily many derivative map compositions (cf. Section \ref{sec:noncon}).
There are two main ingredients for the proof of the non-contraction property: The first one requires that along every orbit and for every ball to ball collision there exists a subsequence of collision times, such that the pre-collisional velocity differences of the ball to ball collisions are uniformly bounded from below 
(cf. Theorem \ref{heart}). The latter will imply that in every finite interval $[0,T]$, $T > 0$, the number of 
ball to ball collisions is uniformly bounded from above by a constant which depends only on the length $T$ and the 
energy $c > 0$ of the system (cf. Lemma \ref{collisionbound}). The second ingredient requires the validity of the 
strict unboundedness property for every phase point (cf. Assumption \nameref{strict}, Section \ref{2}), i.e. the 
divergence of the 
quadratic form $Q$ along every orbit and every vector of the closed expanding cone field (cf. Definition \ref{unboundedness}). 
For a constant $E_0 > 0$, 
the strict unboundedness property will help us to determine a time $\mathtt{T} = \mathtt{T}(E_0) > 0$, for which 
the $Q$-value of every vector from the expanding cone field has a uniform lower bound $E_0$ (cf. Lemma \ref{taubound2}). This allows us to split the proof 
of the non-contraction property into two parts: First, we prove that the non-contraction property holds for every $t \leq 
\mathtt{T}$ and, second, for every $t > \mathtt{T}$. Note, that the uniform lower bound of the velocity differences and its 
implications is used for the first part only. 

Conditioning the validity of the non-contraction 
property to the validity of strict unboundedness has the advantage that we free ourselves from having to find a Lyapunov 
semi-norm for this model, which is an inherently difficult task by itself. Additionally, as we discuss further below, the 
strict unboundedness property has already been verified for three falling balls with mass configuration given in 
(\ref{special}) \cite{HT20}.\footnote{In fact, the bold reader may verify that the result of Theorem \ref{heart} can be 
implemented into \cite{HT20}, which will yield strict unboundedness for every mass configuration $m_1 > m_2 > m_3$.} 

It is already known, that the continuity of Lagrangian subspaces \ref{C3} is true for 
an arbitrary number of balls \cite{W90a,W91}. 
For the special restriction of masses (\ref{special}) the configuration space (\ref{configurationspace}) can be unfolded to a 
wide wedge \cite[Definition 6.1]{W98}. Wojtkowski's insight \cite{W16} allowed to verify condition \ref{C5}, by showing that, due to the unfolding of the wedge, orbits hitting the unaligned triple collision singularity manifold can be uniquely continued 
\cite[Subsection 7.3]{HT20}. Except for the missing triple singularity manifold, this system is identical to the system of 
falling balls up to a $Q$-isometric coordinate transformation (\ref{fbtowedge}). In order to distinguish between the two systems we follow Wojtkowski \cite{W98} and call the former system a particle falling in a wide wedge system.

Since these systems relate to each other via a $Q$-isometric coordinate transformation it suffices to verify the 
conditions of the LET and the abundance of sufficiently expanding points in only one of the systems. 

For the particular mass restrictions (\ref{special}) we proved the strict unboundedness property, the Chernov-Sinai ansatz 
\ref{C4} and the abundance of sufficiently expanding points \cite{HT20}. Using \cite[Lemma 7.7]{LW92}, it takes not much 
effort to check the regularity of singularity manifolds \ref{C1} (cf. Section \ref{sec:wedge}). Since the strict unboundedness 
assumption is valid, the new result of this work gives that the non-contraction property \ref{C2} is valid as well. Therefore, 
we arrive at the conclusion that a particle falling in a three dimensional wide wedge is ergodic.

\section{Main results}\label{2}
The phase space $\mathcal{M}$ is partitioned ($\operatorname{mod} 0$) into subsets $\mathcal{M}_i$, 
$i = 1, \ldots, N$. $\mathcal{M}_1$ contains the states right after a collision with the floor and 
$\mathcal{M}_i$, $i = 2, \ldots, N$, contains the states right after a collision of balls $i-1$, $i$. 
The Poincar\'e map $T:\ \mathcal{M} \circlearrowleft$ describes the movement from one collision to 
the next. After applying Wojtkowski's convenient coordinate transformation $(q,p) \to (h,v) \to (\xi, \eta)$ 
(\ref{hv}), (\ref{xieta}), we obtain an expanding cone field 
$\{\mathcal{C}(x):\ x \in \mathcal{M}\}$, explicitly given by
\begin{align}
\mathcal{C}(x) &= \{(\delta \xi, \delta \eta) \in \mathbb{R}^{N} \times \mathbb{R}^{N}:\ 
Q(\delta \xi, \delta \eta) > 0,\ \delta \xi_0 = 0,\ \delta \eta_0 = 0\} \cup \{\vec{0}\},\nonumber\\
\mathcal{C}^{\prime}(x) &= \{(\delta \xi, \delta \eta) \in \mathbb{R}^{N} \times \mathbb{R}^{N}:\ 
Q(\delta \xi, \delta \eta) < 0,\ \delta \xi_0 = 0,\ \delta \eta_0 = 0\} \cup \{\vec{0}\}.\nonumber 
\end{align}
where $(\delta \xi, \delta \eta) = (\delta \xi_0, \ldots, \delta \xi_{N-1}, \delta \eta_0, \ldots, 
\delta \eta_{N-1})$ denote the coordinates in tangent space. The quadratic form $Q$ is defined 
(cf. Definition \ref{Qdef}) by a pair of constant, transversal Lagrangian subspaces (\ref{constant}) and the 
symplectic form $\omega$. For this choice of Lagrangian subspaces $Q$ becomes the Euclidean inner product
\begin{align}
Q(\delta \xi, \delta \eta) = \langle \delta \xi, \delta \eta \rangle = 
\sum_{i = 1}^{N-1} \delta \xi_i \delta \eta_i.\nonumber
\end{align}
Denote by $\overline{\mathcal{C}(x)}$ the closure of the cone $\mathcal{C}(x)$, let 
$d_xT^n = d_{T^nx}T \ldots d_{Tx}T d_xT$ and 
$(d_{T^n x}T)_{n \in \mathbb{N}} = (d_xT, d_{Tx}T, d_{T^2x}T,\ldots)$. 
The sequence $(d_{T^n x}T)_{n \in \mathbb{N}}$ is called unbounded, if
\begin{align}
\lim_{n \to +\infty} Q(d_{x}T^{n}v) = +\infty,\ \forall\ v \in \mathcal{C}(x) \setminus \{\vec{0}\},\nonumber
\end{align}
and strictly unbounded, if
\begin{align}
\lim_{n \to +\infty} Q(d_{x}T^{n}v) = +\infty,\ \forall\ v \in \overline{\mathcal{C}(x)} \setminus 
\{\vec{0}\}.\nonumber
\end{align}
For the proof of the non-contraction property (cf. Theorem A below), we have to assume that 
$(d_{T^n x}T)_{n \in \mathbb{N}}$ is strictly unbounded for every $x \in \mathcal{M}$.
\begin{ass}[SU]\label{strict}
  For every $x \in \mathcal{M}$, we have 
	\begin{align}
	\lim_{n \to + \infty}Q(d_xT^n(\delta \xi, \delta \eta)) = +\infty,\nonumber
	\end{align}
	for all $(\delta \xi, \delta \eta) \in \overline{\mathcal{C}(x)} \setminus \{\vec{0}\}$.
\end{ass}
Due to Proposition 6.2 and Theorem 6.8 of \cite{LW92}, Assumption (\nameref{strict}) also implies the strict 
unboundedness for the orbit in negative time $(d_{T^n x}T)_{n \in \mathbb{Z}^-}$, i.e.
\begin{align}
\lim_{n \to -\infty} Q(d_xT^n v) = -\infty,\ \forall\ v \in \overline{\mathcal{C}^{\prime}(x)} \setminus 
\{\vec{0}\}.\nonumber
\end{align} 
The singularity manifold on which $T$ resp. $T^{-1}$ is not well-defined is given by $\mathcal{S}^+$ 
resp. $\mathcal{S}^-$. Let $\mu_{\mathcal{S}^+}$ resp. $\mu_{\mathcal{S}^-}$ be the 
measures induced on the codimension one hypersurfaces $\mathcal{S}^+$ resp. $\mathcal{S}^-$, 
from the smooth $T$-invariant measure $\mu$. We further abbreviate
\begin{align}
\mathcal{S}_n^{\pm} = \mathcal{S}^{\pm} \cup T^{\mp 1}\mathcal{S}^{\pm} \cup \ldots \cup T^{\mp (n-1)}\mathcal{S}^{\pm}.\nonumber
\end{align}
Under assumption (\nameref{strict}), we prove the non-contraction property which is one of the five 
conditions \ref{C1} - \ref{C5} of the LET (cf. Section \ref{sec:ergodicity})
\begin{thma}[Non-contraction property]
Assume that assumption (\nameref{strict}) holds. Then, there exists $\zeta > 0$, such that
\begin{enumerate}[resume]
\item for every $n \geq 1$, $x \in \mathcal{M} \setminus \mathcal{S}_n^+$, and
$(\delta \xi, \delta \eta) \in \overline{\mathcal{C}(x)}$, we have
\begin{align}
\|d_xT^n(\delta \xi, \delta \eta)\| \geq \zeta \|(\delta \xi, \delta \eta)\|,\nonumber
\end{align}
\item for every $n \geq 1$, $x \in \mathcal{M} \setminus \mathcal{S}_n^-$, and
$(\delta \xi, \delta \eta) \in \overline{\mathcal{C}^{\prime}(x)}$, we have
\begin{align}
\|d_xT^{-n}(\delta \xi, \delta \eta)\| \geq \zeta \|(\delta \xi, \delta \eta)\|.\nonumber
\end{align}
\end{enumerate}
\end{thma}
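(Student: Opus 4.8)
The plan is to transfer everything to the flow and then combine the two ingredients advertised in the introduction. In the coordinates (\ref{hv}), (\ref{xieta}) the flow derivative equals the identity between collisions, so, as recalled in the introduction, it is equivalent to prove the non-contraction estimates for the flow, whose derivative $d_x\Phi^{t}$ (at the flow time $t = t(x,n)$ of the $n$-th collision) factors as a finite product $U_k\cdots U_1$ of the linear collision maps met in $[0,t]$, the free-flight pieces contributing the identity. Everything then rests on one elementary inequality: for every $w = (\delta\xi,\delta\eta)$ the Cauchy--Schwarz and arithmetic--geometric mean inequalities give $Q(w) = \langle\delta\xi,\delta\eta\rangle \le \tfrac12\|w\|^2$, hence $\|w\| \ge \sqrt{2\,Q(w)}$ whenever $Q(w) \ge 0$. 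Because the cone field is monotone, $Q(d_x\Phi^{t}v) \ge Q(v) \ge 0$ for $v \in \overline{\mathcal{C}(x)}$, so $\|d_x\Phi^{t}v\| \ge \sqrt{2\,Q(d_x\Phi^{t}v)}$ at once.

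Fix $E_0 > 0$ and let $\mathtt{T} = \mathtt{T}(E_0)$ be the time produced by Lemma \ref{taubound2}. I would split along this threshold. In the long-time regime $t > \mathtt{T}$, Lemma \ref{taubound2} gives $Q(d_x\Phi^{t}v) \ge E_0$ for every unit $v \in \overline{\mathcal{C}(x)}$ and, by monotonicity along the orbit, for every $t \ge \mathtt{T}$; since $Q$ is homogeneous of degree two this reads $Q(d_x\Phi^{t}v) \ge E_0\|v\|^2$ for arbitrary $v$, whence $\|d_x\Phi^{t}v\| \ge \sqrt{2E_0}\,\|v\|$. This disposes of the long-time regime with a constant independent of the orbit and, as noted in the introduction, makes no use of the velocity-difference bound.

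In the short-time regime $0 < t \le \mathtt{T}$ the form $Q$ is useless near $\partial\mathcal{C}(x)$, so I would bound the product $U_k\cdots U_1$ directly. By Lemma \ref{collisionbound} the number $k$ of collisions in $[0,\mathtt{T}]$ is at most a constant $N_0 = N_0(\mathtt{T},c)$ depending only on $\mathtt{T}$ and the energy $c$. Each collision map $U_j$ is an explicit linear symplectic map determined by the mass ratios alone, so its norm and inverse norm are bounded by a constant $K$; hence $\|d_x\Phi^{t}v\| = \|U_k\cdots U_1 v\| \ge K^{-N_0}\|v\|$. Setting $\zeta = \min\{\sqrt{2E_0},\,K^{-N_0}\}$ proves part (i). Part (ii) is entirely symmetric: one works with $\mathcal{C}'(x)$ and the negative-time strict unboundedness recorded after Assumption (\nameref{strict}), uses $\|w\| \ge \sqrt{-2\,Q(w)}$ on $\overline{\mathcal{C}'(x)}$, and invokes the time-reversed forms of Lemmas \ref{taubound2} and \ref{collisionbound}.

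The heavy lifting sits inside the two ingredients I am allowed to assume, so within the assembly the main obstacle is to make the short-time estimate uniform in $x$ and $n$. Concretely, one must be sure that the collision count $N_0$ and the bound $K$ on the collision maps survive as the orbit approaches, but avoids, the singularity set $\mathcal{S}_n^{\pm}$; this is exactly where Theorem \ref{heart} is indispensable, since without a uniform lower bound on post-collisional separation speeds the balls could collide arbitrarily often in $[0,\mathtt{T}]$ and the factor $K^{-N_0}$ would collapse. A second, more structural point --- and the reason strict rather than plain unboundedness is hypothesised --- is that part (i) must also hold for boundary vectors $v \in \partial\mathcal{C}(x)$, where $Q(v) = 0$ offers no leverage; these are controlled uniformly only through Lemma \ref{taubound2}, whose validity presupposes convergence of $Q$ on the whole closed cone $\overline{\mathcal{C}(x)}$.
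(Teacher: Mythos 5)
Your long-time regime ($t > \mathtt{T}$) is exactly the paper's argument: Lemma \ref{taubound2} plus the inequality $\|w\|_2 \geq \sqrt{2Q(w)}$ and $Q$-monotonicity give the bound $\sqrt{2E_0}$. The gap is in the short-time regime, and it is genuine. You claim that Lemma \ref{collisionbound} bounds the number $k$ of collisions in $[0,\mathtt{T}]$ by a constant $N_0$ and that each collision map has norm and inverse norm bounded by a constant $K$ ``determined by the mass ratios alone,'' so that $\|U_k\cdots U_1 v\| \geq K^{-N_0}\|v\|$. Both claims fail for \emph{floor} collisions. Lemma \ref{collisionbound} bounds only the number of \emph{ball-to-ball} collisions; the paper explicitly recalls (citing \cite[Remark 10.3]{HT19}) that arbitrarily many $(0,1)$ collisions can occur in finite time --- indeed this is the stated reason for passing to the flow in the first place. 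Moreover the floor derivative $d\Phi_{0,1}$ in (\ref{dxieta}) contains the entry $\beta = -2/(m_1 v_1^-)$, which blows up as the lowest ball grazes the floor, so neither $\|d\Phi_{0,1}\|$ nor $\|d\Phi_{0,1}^{-1}\|$ admits a uniform bound $K$. Your $K^{-N_0}$ therefore does not exist. Theorem \ref{heart} cannot rescue this: it controls pre-collisional velocity differences of ball-to-ball collisions and hence the \emph{ball-to-ball} collision count, but it says nothing about the accumulation of floor bounces or the size of $\beta$.

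The paper closes precisely this hole with a dedicated lemma: for any $n \geq 1$ and any $(\delta\xi,\delta\eta) \in \overline{\mathcal{C}(x)}$ of unit $\|\cdot\|_{HT}$-length, $\|d_x\Phi_{0,1}^{n}(\delta\xi,\delta\eta)\|_{HT} \geq E_1$ \emph{uniformly in} $n$. The point is that $d\Phi_{0,1}$ is a shear $\bigl(\begin{smallmatrix} I & 0 \\ B & I\end{smallmatrix}\bigr)$ with $\beta > 0$, and a contradiction argument using the cone condition $\sum_i \delta\xi_i\delta\eta_i \geq 0$ shows the shear cannot collapse a cone vector: if both $\|\delta\xi\|$ and $\|n\beta\delta\xi_1+\delta\eta_1\|$ were small then $\delta\eta_1$ would have to carry most of the length while $\delta\xi_1\delta\eta_1 < 0$, violating the cone inequality at linear versus quadratic order in the small parameter. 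A companion lemma (using the Cheng--Wojtkowski norm $\|\cdot\|_{CW}$, which is invariant under the blocks $D_i^T$) handles a \emph{fixed} number of ball-to-ball collisions, and only there does Lemma \ref{collisionbound} (hence Theorem \ref{heart}) enter. You need an argument of this kind --- one that exploits membership in the closed cone rather than operator-norm bounds --- to treat the floor collisions before your assembly of the two regimes is complete. Part (ii) then follows by time reversal as you say.
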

For three falling balls with the additional mass restriction (\ref{special}), 
the configuration space can be unfolded to a billiard table where the ominous proper alignment condition 
\ref{C5} is satisfied \cite{W16,HT20}. The reason is, that the unfolded system misses the unaligned 
triple collision singularity manifold, since every orbit passing through it can be smoothly continued. 
Except for the missing triple singularity manifold, the system is identical to the system of falling balls up 
to a $Q$-isometric coordinate transformation (\ref{fbtowedge}). In order to distinguish between the two systems we 
follow Wojtkowski \cite{W98} and call such a system a particle falling in a wide wedge. Assumption (\nameref{strict}) 
was proven for three falling balls with mass configurations (\ref{special}) in \cite{HT20}. Therefore, according to 
Theorem A, the non-contraction property holds for this system. Incorporating complementary 
previous results from \cite{HT20} we will prove in Section \ref{sec:wedge} 
\begin{thmb}[Ergodicity]
Consider the system of 3 falling balls with mass restrictions (\ref{special}). Then, the unfolded system of a particle falling in a three dimensional wide wedge is ergodic.
\end{thmb}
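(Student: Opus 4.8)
The plan is to verify all five hypotheses \ref{C1}--\ref{C5} of the Local Ergodic Theorem for the wide wedge system and then to upgrade the resulting local ergodicity to global ergodicity by a transitivity argument. Since the wide wedge system and the falling ball system are conjugate via the $Q$-isometric coordinate transformation (\ref{fbtowedge}), each condition may be checked in whichever of the two systems is more convenient. Four of the five conditions are already in hand: the continuity of transversal Lagrangian subspaces \ref{C3} holds for any number of balls by \cite{W90a,W91}; the Chernov--Sinai ansatz \ref{C4}, the strict unboundedness Assumption (\nameref{strict}), and the abundance of sufficiently expanding points were all established for the mass restriction (\ref{special}) in \cite{HT20}; and proper alignment \ref{C5} holds precisely because the unfolding removes the unaligned triple collision manifold, so that every orbit meeting it continues uniquely \cite{W16,HT20}.

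The remaining analytic input is the non-contraction property \ref{C2}. Here I would invoke Theorem A: since Assumption (\nameref{strict}) has been verified for this system in \cite{HT20}, Theorem A applies verbatim and supplies the uniform constant $\zeta > 0$ controlling both $d_xT^n$ on $\overline{\mathcal{C}(x)}$ and $d_xT^{-n}$ on $\overline{\mathcal{C}^{\prime}(x)}$ off the iterated singularity sets $\mathcal{S}_n^{\pm}$. It then remains only to check the regularity of the singularity manifolds \ref{C1}, which I would carry out in Section \ref{sec:wedge} using \cite[Lemma 7.7]{LW92}; this reduces the verification to exhibiting the singularity set of the wide wedge as a finite union of smooth codimension-one pieces satisfying the required alignment and induced-measure bounds, a direct computation in the billiard coordinates.

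With all of \ref{C1}--\ref{C5} established, the Local Ergodic Theorem of \cite{LW92} provides, around every sufficiently expanding point, an open neighbourhood that lies mod $0$ in a single ergodic component. The final step is the connectivity argument (cf. Subsection \ref{sec:ab}): by the abundance of sufficiently expanding points, the set of such points has full measure and is arcwise connected, so any two of their LET neighbourhoods can be joined by a finite chain of pairwise overlapping neighbourhoods, forcing them into one and the same ergodic component. Since the Katok--Strelcyn theory \cite{KS86} guarantees that the phase space decomposes mod $0$ into at most countably many ergodic components, this forces a single component of full measure, and hence ergodicity of the particle falling in a three dimensional wide wedge.

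I expect the main obstacle to lie in the transitivity step rather than in the individual conditions, since the latter are either cited or reduce to the already-proven Theorem A. One must ensure that arcwise connectedness of the full-measure set of sufficiently expanding points genuinely propagates through a chain of overlapping LET neighbourhoods to a single ergodic component mod $0$, checking in particular that the exceptional null sets $\mathcal{S}_n^{\pm}$ and the points of insufficient expansion do not disconnect the chaining.
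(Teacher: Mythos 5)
Your proposal is correct and follows essentially the same route as the paper: verify \ref{C1}--\ref{C5} (citing \cite{HT20,W90a,W91,W16} for \ref{C3}, \ref{C4}, \ref{C5} and strict unboundedness, invoking Theorem A for \ref{C2}, and \cite[Lemma 7.7]{LW92} for \ref{C1}), then apply the LET together with the abundance of sufficiently expanding points from Subsection \ref{sec:ab}. The only small divergence is in how you describe the use of \cite[Lemma 7.7]{LW92}: no direct computation on the singularity set is needed, since that lemma reduces \ref{C1} to three hypotheses --- that $T$ extends to a diffeomorphism as in (\ref{symplectomorphism}), proper alignment, and $Q$-monotonicity --- all of which are already established.
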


\section{The system of \textit{N} falling balls}\label{sec:fb}
Let $q_{i} = q_{i}(t)$ be the position, $p_{i} = p_{i}(t)$ the momentum and $v_i = v_i(t)$ the velocity
of the $i$-th ball. The balls are aligned on top of each other and are therefore confined to
\begin{align}\label{configurationspace}
\mathbf{N}_q = \{(q,p) \in \mathbb{R}^N \times \mathbb{R}^N:\ 0 \leq q_1 \leq \ldots
\leq q_N\}
\end{align}
where the subindex $q$ in $\mathbf{N}_q$ refers to the coordinates $(q,p)$.
The momenta and the velocities are related by $p_i = m_iv_i$. We
assume that the masses $m_i$ decrease strictly as we go upwards $m_1 > \ldots > m_N$.
The movement of the balls are the
result of a linear potential field and their kinetic energies. The total energy of the system is given
by the Hamiltonian function
\begin{align}
H(q,p) = \sum_{i = 1}^{N} \frac{p_i^2}{2m_i} + m_iq_i.\nonumber
\end{align}
The Hamiltonian equations are
\begin{align}\label{equations}
\begin{array}{ccc}
\dot{q_i} & = & \dfrac{p_i}{m_i},\\[0.3cm]
\dot{p_i} & = & -m_i.
\end{array}
\end{align}

The dots indicate differentiation with respect to time $t$ and the Hamiltonian vector field on the
right hand side will be denoted as $X_H = X_H(q,p)$.
For some energy value $c > 0$, the energy manifold $E_c$ and its tangent space $\mathcal{T}E_c$ are given by
\begin{align}\label{tangentenergy}
E_c &= \{(q,p) \in \mathbb{R}_{+}^{N} \times \mathbb{R}^{N}:\ H(q,p) =  \sum_{i = 1}^N
\frac{p_i^2}{2m_i} + m_iq_i = c\},\nonumber\\
\mathcal{T}_{(q,p)}E_c &= \{(\delta q, \delta p) \in \mathbb{R}^N \times \mathbb{R}^N:\
\nabla_{(q, p)}H(\delta q, \delta p)
= \sum_{i=1}^N\dfrac{p_i\delta p_i}{m_i} + m_i\delta q_i = 0\}.
\end{align}
Including the restriction of the positions amounts to $E_c \cap \mathbf{N}_q$.
The Hamiltonian vector field (\ref{equations}) gives rise to the Hamiltonian flow
\begin{align}
 \phi:\ &\mathbb{R} \times \left(E_c \cap \mathbf{N}_q\right) \to E_c \cap \mathbf{N}_q, \nonumber\\
 &(t,(q,p)) \mapsto \phi(t,(q,p)).\nonumber
\end{align}
For convenience, the image will also be written with the time variable as superscript,
i.e. $\phi(t,(q,p)) = \phi^{t}(q,p)$.\\
The standard symplectic form $\omega=\sum_{i = 1}^{N} dq_i \wedge dp_i$ induces the symplectic volume
element $\Omega = \bigwedge_{i = 1}^N \omega$.
The volume element on the energy surface is obtained by contracting $\Omega$,
by a vector $u$, where $u$ is a
vector satisfying $dH(u) = 1$. Denoting the contraction operator by $\iota$, the
volume element on the energy surface is given by $\iota (u) \Omega$. Since the flow preserves
the standard symplectic form, it preserves the volume element and, hence, the Liouville measure $\nu$
on $E_c \cap \mathbf{N}_q$ obtained from it.

We define the Poincar\'e section, which describes the states right after a collision as
$\mathcal{M} = \mathcal{M}_1 \cup \ldots \cup \mathcal{M}_N$, with
\begin{align}
&\mathcal{M}_1 := \{(q,p) \in E_c \cap \mathbf{N}_q: q_1 = 0,\ p_1/m_1 \geq 0\},
\nonumber\\
&\mathcal{M}_l := \{(q,p) \in E_c \cap \mathbf{N}_q: q_{l-1} = q_l,\
p_{l-1}/m_{l-1} \leq p_l/m_l\},\ l = 2, \ldots, N.\nonumber
\end{align}
The set of states right before collision $\mathcal{M}^b = \mathcal{M}_1^b \cup \ldots
\cup \mathcal{M}_N^b$, are defined by
\begin{align}
&\mathcal{M}_1^b := \{(q,p) \in E_c \cap \mathbf{N}_q: q_1 = 0,\ p_1/m_1 < 0\},\nonumber\\
&\mathcal{M}_l^b := \{(q,p) \in E_c \cap \mathbf{N}_q: q_{l-1} = q_l,\
p_{l-1}/m_{l-1} > p_l/m_l\},\ l = 2, \dots, N.\nonumber
\end{align}
The collision between balls $i$ and $i+1$ is fully elastic, i.e. the total momentum and
the kinetic energy are preserved. Therefore, the momenta resp. velocities change according to
\begin{align}\label{collisionqp}
\begin{array}{ccc}
p_{i}^+ & = & \gamma_i p_{i}^- + (1 + \gamma_i)p_{i+1}^-,\\[0.1cm]
p_{i+1}^+ & = & (1 - \gamma_i)p_{i}^- - \gamma_i p_{i+1}^-,\\[0.2cm]
v_{i}^+ & = & \gamma_i v_{i}^- + (1 - \gamma_i)v_{i+1}^-,\\[0.1cm]
v_{i+1}^+ & = & (1 + \gamma_i)v_{i}^- - \gamma_i v_{i+1}^-,
\end{array}
\end{align}
where $\gamma_{i} = (m_{i} - m_{i+1})/(m_{i} + m_{i+1})$, $i = 1, \dots, N-1$. When the bottom particle
collides with the floor the sign of its momentum resp. velocity is reversed
\begin{align}\label{v1p1}
\begin{array}{ccc}
p_1^+ = -p_1^-,\\[0.1cm]
v_1^+ = -v_1^-.
\end{array}
\end{align}

This is derived from (\ref{collisionqp}), by setting the floor velocity $v_0$ zero and letting the floor 
mass $m_0$ go to infinity. As a result, the floor collision does not preserve the total momentum.

These collision laws are described by the linear, symplectic, involutory collision map
\begin{align}
\begin{array}{rl}
 \Phi_{i-1,i}:\ &\mathcal{M}^b \to \mathcal{M},\nonumber\\[0.2cm]
 &(q,p^-) \mapsto (q,p^+).\nonumber
\end{array}
\end{align}

We will write $\Phi$ if we do not want to refer to any specific collision.
Let
\begin{align}\label{returntime}
\tau: E_c \cap \mathbf{N}_q \to \mathbb{R}_+,
\end{align}
be the first return time to $\mathcal{M}^b$. We define the Poincar\'e map as
\begin{align}
T:\  &\mathcal{M} \to \mathcal{M},\nonumber \\
&(q,p) \mapsto \Phi \circ \phi^{\tau(q,p)}(q,p).\nonumber
\end{align}
$T$ is the collision map, that maps the state from right after one collision to the next.

On $\mathcal{M}$, we obtain the volume element $\iota(X_H) \iota (u) \Omega$, by contracting the volume element
$\iota (u) \Omega$ on the energy surface with respect to the direction of the flow $X_H$.
This exterior form defines a smooth measure $\mu$ on $\mathcal{M}$, which is $T$-invariant.

Matching the present state with the next collision in the future resp. the past,
we obtain two (mod 0) partitions of $\mathcal{M}$
\begin{align}
\mathcal{M} = \mathcal{M}_{1,1}^+ \cup \bigcup_{i = 1}^N
\bigcup_{\substack{j = 1 \\ j \neq i}}^N \mathcal{M}_{i,j}^+ =
\mathcal{M}_{1,1}^- \cup \bigcup_{i = 1}^N
\bigcup_{\substack{j = 1 \\ j \neq i}}^N \mathcal{M}_{i,j}^-,\nonumber
\end{align}
where
\begin{align}
&\mathcal{M}_{1,1}^+ = \{x \in \mathcal{M}_1:\ Tx \in \mathcal{M}_1\},\nonumber\\
&\mathcal{M}_{i,j}^+ = \{x \in \mathcal{M}_i:\ Tx \in \mathcal{M}_j\},\
1 \leq i,j \leq N,\ j \neq i,\nonumber\\
&\mathcal{M}_{1,1}^- = \{x \in \mathcal{M}_1:\ T^{-1}x \in \mathcal{M}_1\},\nonumber\\
&\mathcal{M}_{i,j}^- = \{x \in \mathcal{M}_i:\ T^{-1}x \in \mathcal{M}_j\},\
1 \leq i,j \leq N,\ j \neq i.\nonumber
\end{align}
For some instances, it is useful to define the subset 
\begin{align}
\mathcal{M}_{1,1}^{m,+} := \mathcal{M}_{1,1}^+ \cap T^{-1}\mathcal{M}_{1,1}^{+} \cap \ldots \cap 
T^{-m}\mathcal{M}_{1,1}^{+} \subset \mathcal{M}_{1,1}^+,\ m \geq 1,\nonumber
\end{align} 
which contains the states returning $(m+1)$-times to the floor. 

Each partition element $\mathcal{M}_{i,j}^{\pm}$ has a boundary $\partial \mathcal{M}_{i,j}^{\pm}$ and the
intersection of two elements of the same partition is strictly contained in the intersection of their
boundaries, i.e.
\begin{align}
\mathcal{M}_{i,j}^{\pm} \cap \mathcal{M}_{k,l}^{\pm} \subset \partial \mathcal{M}_{i,j}^{\pm} \cap
\partial \mathcal{M}_{k,l}^{\pm},\ (i,j) \neq (k,l).\nonumber
\end{align}

The boundary of each partition consists of a regular part $\mathcal{R}^{\pm}$ and a singular part
$\mathcal{S}^{\pm}$, where we set $\partial \mathcal{M}^{\pm} = \mathcal{R}^{\pm} \cup \mathcal{S}^{\pm}$.
The singular part comprises the following codimension one submanifolds
\begin{align}
&\mathcal{S}_{j,i}^+ = \mathcal{M}_{j,i}^+ \cap \mathcal{M}_{j,i+1}^+,\
\mathcal{S}_{i,j}^- = \mathcal{M}_{i,j}^- \cap \mathcal{M}_{i+1,j}^-,
\nonumber\\[0.1cm]
&i = 2, \ldots, N-1,\ j = 1, \ldots, N,\ j \neq i, i+1,\nonumber\\[0.2cm]
&\mathcal{S}_{k,1}^+ = \mathcal{M}_{k,1}^+ \cap \mathcal{M}_{k,2}^+,\
\mathcal{S}_{1,k}^- = \mathcal{M}_{1,k}^- \cap \mathcal{M}_{2,k}^-,\nonumber\\[0.1cm]
&k = 1, \ldots , N,\  k \neq 2.\nonumber
\end{align}
These sets are called singularity manifolds. The states in $\mathcal{S}_{j,i}^{\pm}$ face
a triple collision next, while the states in $\mathcal{S}_{k,1}^+$, $\mathcal{S}_{1,k}^-$
experience a collision of the lower two balls with the floor next. The maps $T$ resp.
$T^{-1}$ have two different images and are therefore not
well-defined on the sets $\mathcal{S}_{j,i}^{+}$, $\mathcal{S}_{k,1}^{+}$ resp.
$\mathcal{S}_{i,j}^{-}$, $\mathcal{S}_{1,k}^{-}$, because the compositions
$\Phi_{i-1,i} \circ \Phi_{i,i+1}$ and $\Phi_{0,1} \circ \Phi_{1,2}$ do not commute.
In this case, we follow the convention, that the orbit branches into two suborbits and we
continue the system on each branch separately.
We abbreviate, for $n \geq 1$,
\begin{align}
\mathcal{S}^{\pm} = \bigcup_{i = 2}^{N-1}
\bigcup_{\substack{j = 1 \\ j \neq i, i+1}}^N \mathcal{S}_{i,j}^{\pm} \cup
\bigcup_{\substack{k = 1 \\ k \neq 2}}^N \mathcal{S}_{k,1}^+ \cup 
\bigcup_{\substack{k = 1 \\ k \neq 2}}^N \mathcal{S}_{1,k}^-,\
\mathcal{S}_n^{\pm} = \mathcal{S}^{\pm} \cup T^{\mp 1}\mathcal{S}^{\pm} \cup \ldots \cup
T^{\mp (n-1)}\mathcal{S}^{\pm}.\nonumber
\end{align}
In the upcoming sections we also want to refer to the singularity manifolds for the 
flow. We define them, informally, as $\mathcal{S}_t^{\pm}$, for every $t \in \mathbb{R}_+$.

Similarly to $\mathcal{S}^{\pm}$, the $T^{\pm 1}$-image of all points in
$\mathcal{R}^{\pm}$ consists of two simultaneous collisions. The key difference to
singular points is that the derivatives of the involved collision maps commute. This
follows from the fact, that the two pairs of collisions do not share a common ball.
Hence, for regular points our orbit does not split into two suborbits and can therefore
be continued uniquely.
Since the collision maps of the simultaneous collisions for points in
$\mathcal{R}^{\pm}$ commute and $T$ is well-defined on
$\mathcal{S}^- \setminus \mathcal{S}^+$, the regularity properties of the flow and the
collision map imply that, for $n \geq 1$,
\begin{align}\label{symplectomorphism}
T^n:\ \mathcal{M} \setminus \mathcal{S}_n^+ \to \mathcal{M} \setminus \mathcal{S}_n^-
\end{align}
is a symplectomorphism, i.e. $T$ extends diffeomorphically to $\mathcal{R}^+$.

\section{Lyapunov exponents}\label{Lyapunov}
The study of Lyapunov exponents was carried out using a method developed by Wojtkowski
in the string of papers \cite{W85,W88,W91,LW92,W00}. This method has been successfully
implemented to derive that an arbitrary number of falling balls has non-zero Lyapunov
exponents almost everywhere \cite{S96}. The basic tools of the Lyapunov
exponent machinery were further advanced and are inevitable in the study of
ergodicity of Hamiltonian systems \cite{LW92}.
We are therefore going to formulate the fundamentals of this method and how it applies
to the system of falling balls.\\
The standard symplectic form $\omega=\sum_{i = 1}^N dq_i\wedge dp_i$ is given by
\begin{align}
\omega (v_1, v_2) = \left\langle v_{1,1} , v_{2,2} \right\rangle - \left\langle v_{2,1}, v_{1,2} \right\rangle,
\nonumber
\end{align}
where $v_i = (v_{i,1}, v_{i,2}) \in \mathbb{R}^N \times \mathbb{R}^N$, $i = 1,2$.
A Lagrangian subspace $V$ is a subspace of dimension $N$ which is the $\omega$-orthogonal
complement to itself, i.e. the symplectic form is zero for every input from $V$
\cite[Definition 6.4]{LM87}. It is equivalently the subspace of maximal dimension on
which $\omega$ vanishes. Note, that for two transversal Lagrangian subspaces $V_1$,
$V_2$, every vector $v \in \mathbb{R}^N \times \mathbb{R}^N$ has a unique decomposition
$v = v_1 + v_2$, $v_1 \in V_1$, $v_2 \in V_2$.
\begin{definition}
For two transversal Lagrangian subspaces $V_1$, $V_2$ we define the cone between $V_1$ and $V_2$ by
\begin{align}
\mathcal{C}_{V_1,V_2} = \{v \in \mathbb{R}^N \times \mathbb{R}^N:\ \omega (v_1, v_2) > 0,\ v = v_1 + v_2,\
v_i \in V_i,\ i=1,2\} \cup \{\vec{0}\}.\nonumber
\end{align}
\end{definition}
\begin{definition}\label{Qdef}
The quadratic form $Q_{V_1,V_2}$, or $Q_{V_1,V_2}$-form, associated to a pair of
transversal Lagrangian subspaces $V_1$, $V_2$ is given by
\begin{align}
Q_{V_1,V_2}:\ &\mathbb{R}^N \times \mathbb{R}^N \to \mathbb{R},\nonumber\\
&v \mapsto \omega(v_1,v_2),\nonumber
\end{align}
where $v = v_1 + v_2$, $v_i \in V_i$, $i = 1, 2$.
\end{definition}
Observe, that the quadratic $Q_{V_1,V_2}$-form is indefinite with signature $(N,N)$
on $\mathbb{R}^N \times \mathbb{R}^N$. With the definitions above, the
quadratic form can be used to define the cone
\begin{align}
\mathcal{C}_{V_1,V_2} = \{v \in \mathbb{R}^N \times \mathbb{R}^N:\ Q_{V_1,V_2}(v) > 0\}
\cup \{\vec{0}\}. \nonumber
\end{align}
The complementary cone of $\mathcal{C}_{V_1,V_2}$ is given by
\begin{align}
\mathcal{C}_{V_1,V_2}^{\prime} = \{v \in \mathbb{R}^N \times \mathbb{R}^N:\
Q_{V_1,V_2}(v) < 0\} \cup \{\vec{0}\}. \nonumber
\end{align}
The arguably simplest expression of $Q_{V_1,V_2}$ can be obtained
by associating it to the standard Lagrangian subspaces given by
\begin{align}\label{standard}
L_1 = \mathbb{R}^N \times \{\vec{0}\},\quad L_2 = \{\vec{0}\} \times \mathbb{R}^N.
\end{align}
For this choice of transversal Lagrangian subspaces we will abbreviate $Q = Q_{L_1,L_2}$
and $\mathcal{C} = \mathcal{C}_{L_1,L_2}$. Further, for $v = v_1 + v_2$,
the $Q$-form reads
\begin{align}
Q(v) = \left\langle v_1, v_2\right\rangle.\nonumber
\end{align}
In \cite{W90a}, Wojtkowski introduced two coordinate transformations, $i = 1, \ldots, N$,
\begin{align}\label{hv}
\begin{aligned}
h_i &= \dfrac{p_i^2}{2m_i} + m_iq_i,\quad v_i &= \dfrac{p_i}{m_i},
\end{aligned}
\end{align}
and
\begin{align}\label{xieta}
\begin{aligned}
  (\xi_0, \xi_1, \dots ,\xi_{N-1})^T &= A^{-1}(h_1,h_2,\dots,h_N)^T \\
  (\eta_0, \eta_1, \dots ,\eta_{N-1})^T &= A^{T}(v_1,v_2,\dots,v_N)^T,
\end{aligned}
\end{align}
where $A$ is an invertible matrix depending only on the masses \cite[p. 520]{W90a}. 
In order to keep calculations concise and lucid, we will work exclusively in $(\xi, \eta)$-coordinates. 

The energy manifold, its tangent space and the Hamiltonian vector field take the form
\begin{align}
E_c &= \{(\xi, \eta) \in \mathbb{R}^{N-1} \times \mathbb{R}^{N-1}:\ H(\xi,\eta) = \xi_0
= c\},\nonumber\\
\mathcal{T}E_c &= \{(\delta \xi, \delta \eta) \in \mathbb{R}^{N-1} \times
\mathbb{R}^{N-1}:\  \nabla_{(\xi, \eta)}H(\delta \xi, \delta \eta) = \delta \xi_0
= 0\},\nonumber\\
X_H(\xi,\eta) &= (0,\ldots,0,-1,0,\ldots,0).\nonumber
\end{align}

Intersecting the standard Lagrangian subspaces (\ref{standard}) in $(\delta \xi, \delta \eta)$-coordinates with the 
tangent space of the energy manifold and quotienting out the flow direction gives
\begin{align}\label{constant}
\begin{aligned}
&\mathbf{L}_1 =  \{(\delta \xi, \delta \eta) \in \mathbb{R}^N \times \mathbb{R}^N:\
\delta \xi_{0} = 0,\ \delta \eta_i = 0,\ i = 0, \ldots, N-1\} \simeq
\mathbb{R}^{N-1},\\[0.2cm]
&\mathbf{L}_2 = \{(\delta \xi, \delta \eta) \in \mathbb{R}^N \times \mathbb{R}^N:\
\delta \eta_{0} = 0,\ \delta \xi_i = 0,\ i = 0, \ldots, N-1\} \simeq \mathbb{R}^{N-1}.
\end{aligned}
\end{align}
Thus, the $Q$-form given by $\mathbf{L}_1$, $\mathbf{L}_2$ reduces
to $\mathbb{R}^{N-1} \times \mathbb{R}^{N-1}$ and now amounts to
\begin{align}\label{Qxieta}
 Q(\delta \xi, \delta \eta) = \langle \delta \xi, \delta \eta \rangle = \sum_{i=1}^{N-1}
\delta \xi_i \delta \eta_i,
\end{align}
with no further restrictions, when inserting a vector from $\mathbf{L}_1 \oplus \mathbf{L}_2$.

In these coordinates, the derivative of the flow $d\phi^t$ equals the identity map.
Thus, only the derivatives of the collision maps $d\Phi_{i,i+1}$ are relevant to the
dynamics in tangent space. Since $\delta \xi_0 = 0$, $\delta \eta_0 = 0$ we can reduce
the derivatives of the collision maps to $(2N-2 \times 2N-2)$-matrices. In these
coordinates they are given by
\begin{align}\label{dxieta}
d\Phi_{0,1} =
\begin{pmatrix}
\operatorname{id}_{N-1} & 0\\
B & \operatorname{id}_{N-1}
\end{pmatrix},\
d\Phi_{i,i+1} =
 \begin{pmatrix}
D_i & F_i\\
0 & D_i^T
\end{pmatrix},\
i = 1, \ldots, N-1,
\end{align}
where $B = (b_{m,n})_{m,n = 1}^{N-1}$, $F_i = (f_{m,n})_{m,n = 1}^{N-1}$ have the structure of
the zero matrix, except for the entries $b_{1,1} = \beta$, $f_{i,i} = -\alpha_{i}$ and
$D_i = (d_{m,n})_{m,n = 1}^N$ has the structure of the identity matrix, except
for the following entries in the $i$-th row
\begin{align}
  d_{i,i-1} = 1-\gamma_i,\quad d_{i,i} = -1,\quad d_{i,i+1} = 1+\gamma_i.\nonumber
\end{align}
The terms $\alpha_1, \ldots, \alpha_N$ and $\beta$ in the matrices are non-negative and
given by
\begin{align}\label{alpha}
\beta = -\dfrac{2}{m_1v_1^-},\quad \alpha_{i} =
\dfrac{2m_im_{i+1}(m_i - m_{i+1})(v_i^- - v_{i+1}^-)}{(m_i + m_{i+1})^2}.
\end{align}
Observe, that the strict inequality $m_1 > \ldots > m_N$ of the mass configurations implies, that
$\alpha_i > 0$, since $v_i^- - v_{i+1}^- > 0$.

Using the quadratic form $Q$, we define the open cone $\mathcal{C}$ and the
complementary cone $\mathcal{C}^{\prime}$ associated to the Lagrangian subspaces $\mathbf{L}_1$, $\mathbf{L}_2$ by
\begin{align}
\mathcal{C} &= \{(\delta \xi, \delta \eta) \in \mathbf{L}_1 \oplus \mathbf{L}_2:\
Q(\delta \xi, \delta \eta) = \langle \delta \xi, \delta \eta \rangle > 0\} \cup \{\vec{0}\},\nonumber\\
\mathcal{C}^{\prime} &= \{(\delta \xi, \delta \eta) \in \mathbf{L}_1 \oplus
\mathbf{L}_2:\
Q(\delta \xi, \delta \eta) = \langle \delta \xi, \delta \eta \rangle < 0\} \cup \{\vec{0}\}.\nonumber
\end{align}
The cone field $\left\{ \mathcal{C}(x):\ x \in \mathcal{M} \right\}$ is constant and
therefore continuous in $\mathcal{M}$.
Denote by $\overline{\mathcal{C}}$ the closure of the cone $\mathcal{C}$.
\begin{definition}\label{Defq}
1. The cone $\mathcal{C}$ is called \textit{invariant} at $x \in \mathcal{M}$,
if
 \begin{align}
 d_x T \overline{\mathcal{C}} \subseteq \overline{\mathcal{C}},\nonumber
 \end{align}
2. The cone $\mathcal{C}$ is called
\textit{strictly invariant} at $x \in \mathcal{M}$, if
 \begin{align}
 d_x T \overline{\mathcal{C}} \subseteq \mathcal{C},\nonumber
 \end{align}
3. The cone $\mathcal{C}$ is called
\textit{eventually strictly invariant} at $x \in \mathcal{M}$, if there exists a positive integer
$k = k(x) \geq 1$, such that
 \begin{align}
 d_x T^k \overline{\mathcal{C}} \subseteq \mathcal{C},\nonumber
 \end{align}
4. The map $d_x T$ is called $Q$\textit{-monotone}, if
\begin{align}
Q(d_x T v) \geq Q(v), \nonumber
\end{align}
for all $v \in \mathbf{L}_1 \oplus \mathbf{L}_2$.\\
5. The map $d_x T$ is called \textit{strictly $Q$-monotone}, if
\begin{align}
Q(d_x T v) > Q(v), \nonumber
\end{align}
for all $v \in \mathbf{L}_1 \oplus \mathbf{L}_2 \setminus \{\vec{0}\}$.\\
6. The map $d_x T$ is called \textit{eventually strictly $Q$-monotone}, if there exists a
positive integer $k = k(x) \geq 1$, such that
\begin{align}
Q(d_x T^k v) > Q(v), \nonumber
\end{align}
for all $v \in \mathbf{L}_1 \oplus \mathbf{L}_2 \setminus \{\vec{0}\}$.
\end{definition}
In the definition above, statements 1, 2, 3 are equivalent to statements 4, 5, 6
\cite[Section 4]{LW92}.
In order to obtain non-zero Lyapunov exponents we repeat Wojtkowski's criterion \cite{W85},
which links eventual strict $Q$-monotonicity to non-zero Lyapunov exponents
\begin{qcrit}[Theorem 5.1, \cite{W85}]
If $d_x T$ is eventually strictly $Q$-monotone for $\mu$-a.e. x, then all Lyapunov
exponents are non-zero almost everywhere.
\end{qcrit}
The derivative $d_xT$ is $Q$-monotone for every $x \in \mathcal{M}$ and any number of
falling balls \cite{W90a}.
Sim\'anyi established that $N$, $N \geq 2$, falling balls have non-zero Lyapunov
exponents for $\mu$-a.e. $x \in \mathcal{M}$, by verifying the $Q$-criterion \cite{S96}. 

Observe, that the coordinate transformation (\ref{xieta}) is $Q$-isometric, i.e.
\begin{align}
 Q(\delta \xi, \delta \eta) = Q(A^{-1} \delta h, A^{T} \delta v) =
 Q(\delta h, \delta v), \nonumber
\end{align}
which represents a change of basis inside of both Lagrangian subspaces. Therefore, it does not 
make a difference in terms of the $Q$-form's value whether we operate in $(\delta h, \delta v)$ 
or $(\delta \xi, \delta \eta)$-coordinates.

We close this subsection by formulating the (strict) unboundedness property and the least expansion
coefficients, which will be used to establish criteria for ergodicity.\\
The \textit{least expansion coefficients} $\sigma$, $\sigma_{\mathcal{C}^{\prime}}$, for $n \geq 1$, are defined as
\begin{align}
\sigma(d_x T^n) = \inf_{0\ne v \in \mathcal{C}(x)} \sqrt{\frac{Q(d_{x}T^nv)}{Q(v)}}, \quad
\sigma_{\mathcal{C}^{\prime}}(d_x T^{-n}) = \inf_{0\ne v \in \mathcal{C}^{\prime}(x)}
\sqrt{\frac{Q(d_x T^{-n}v)}{Q(v)}}.\nonumber
\nonumber
\end{align}

\begin{definition}\label{unboundedness}
1. The sequence $(d_{T^n x}T)_{n \in \mathbb{N}}$ is called \textit{unbounded}, if
\begin{align}
\lim_{n \to +\infty} Q(d_x T^n v) = +\infty,\ \forall\ v \in \mathcal{C}(x) \setminus \{\vec{0}\}.\nonumber
\end{align}

2. The sequence $(d_{T^n x}T)_{n \in \mathbb{N}}$ is called \textit{strictly unbounded}, if
\begin{align}
\lim_{n \to +\infty} Q(d_x T^n v) = +\infty,\ \forall\ v \in \overline{\mathcal{C}(x)}
\setminus \{\vec{0}\}.\nonumber
\end{align}
\end{definition}
The least expansion coefficient and the property of strict unboundedness relate to each
other in the following way
\begin{theorem}[Theorem 6.8, \cite{LW92}]\label{relation}
The following assertions are equivalent:
\begin{enumerate}
\item The sequence $(d_{T^nx}T)_{n\in\mathbb{N}}$ is strictly unbounded.\\[-0.3cm]
\item $\lim_{n \to \infty} \sigma(d_xT^n) = \infty$.\\[-0.3cm]
\end{enumerate}
\end{theorem}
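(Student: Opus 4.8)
The plan is to run everything through two monotonicities in the iteration index. Write $L_n:=d_xT^n$. Because $d_xT$ is $Q$-monotone at every point, so is every composition, and since $L_{n+1}=d_{T^nx}T\circ L_n$ we get $Q(L_{n+1}v)\ge Q(L_nv)$ for each fixed $v$; thus $n\mapsto Q(L_nv)$ is non-decreasing, $\lim_{n\to\infty}Q(L_nv)$ exists in $(-\infty,+\infty]$, and on $\overline{\mathcal{C}}$ it is the limit of a non-decreasing sequence of non-negative numbers. The same inequality makes $\sigma(L_n)$ non-decreasing, since for $v\in\mathcal{C}$ we have $Q(L_{n+1}v)/Q(v)\ge Q(L_nv)/Q(v)\ge\sigma(L_n)^2$, hence $\sigma(L_{n+1})\ge\sigma(L_n)\ge1$ and $\lim_n\sigma(L_n)$ exists in $[1,+\infty]$. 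The assertion is that these two limits are infinite together. By homogeneity of $Q$ I record the normalization
\[
\sigma(L_n)^2=\inf_{Q(v)=1}Q(L_nv),
\]
the infimum running over the non-compact level set $\{Q=1\}\subset\mathcal{C}$; its non-compactness is exactly what lets the boundary $\partial\mathcal{C}=\{Q=0\}$ enter the picture.

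For $(1)\Rightarrow(2)$ I argue by contradiction. Assume strict unboundedness and suppose $\sigma(L_n)\not\to\infty$, so by monotonicity $\sigma(L_n)^2\uparrow S^2<\infty$. For each $n$ choose $v_n$ with $Q(v_n)=1$ and $Q(L_nv_n)<S^2+1$. If $(\|v_n\|)$ has a bounded subsequence, pass to it and let $v_n\to v_*$, so $Q(v_*)=1$; for fixed $m$ and all $n\ge m$ the index monotonicity gives $Q(L_mv_n)\le Q(L_nv_n)<S^2+1$, and $n\to\infty$ yields $Q(L_mv_*)\le S^2+1$ for every $m$, whence $\lim_mQ(L_mv_*)<\infty$; as $v_*\in\mathcal{C}\subset\overline{\mathcal{C}}\setminus\{\vec{0}\}$ this contradicts Assumption (SU). Otherwise $\|v_n\|\to\infty$; set $w_n=v_n/\|v_n\|$ and extract $w_n\to w_*$ with $\|w_*\|=1$ and $Q(w_*)=\lim_n\|v_n\|^{-2}=0$, so $w_*\in\overline{\mathcal{C}}\setminus\{\vec{0}\}$ lies on $\partial\mathcal{C}$. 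For fixed $m$ and $n\ge m$, the sandwich $\|v_n\|^{-2}\le\|v_n\|^{-2}Q(L_mv_n)=Q(L_mw_n)\le\|v_n\|^{-2}(S^2+1)$ forces $Q(L_mw_*)=0$ for all $m$, hence $\lim_mQ(L_mw_*)=0<\infty$, again contradicting (SU). Thus $\sigma(L_n)\to\infty$. It is worth stressing that the obstructing vector $w_*$ manufactured here lies on $\partial\mathcal{C}$: this is precisely why control on the \emph{open} cone would be too weak and why the closed cone in Assumption (SU) is indispensable.

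For $(2)\Rightarrow(1)$ the open cone is immediate, since $Q(v)>0$ gives $Q(L_nv)\ge\sigma(L_n)^2Q(v)\to+\infty$. The genuinely delicate case, and the step I expect to be the main obstacle, is a null vector $v\in\overline{\mathcal{C}}\setminus\{\vec{0}\}$ with $Q(v)=0$: here the factor $Q(v)$ in the definition of $\sigma$ degenerates and the uniform bound $\sigma(L_n)^2$ no longer pins down $Q(L_nv)$. Writing $v+\epsilon u$ with $u\in\mathcal{C}$ and expanding $Q(L_n(v+\epsilon u))$ through the bilinear form $B$ polarizing $Q$ only shunts the growth onto $Q(L_nu)$ and onto the cross term $B(L_nv,L_nu)$, neither of which is preserved by $L_n$, so a purely algebraic estimate is not enough. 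The plan is to exploit the symplectic nature of the cocycle---each $L_n$ preserves $\omega$, while the backward maps $L_n^{-1}=d_{T^nx}T^{-n}$ are $Q$-decreasing and expand the complementary cone $\mathcal{C}^{\prime}$---to show that no fixed null direction can remain non-expanding under a monotone cocycle whose least expansion $\sigma(L_n)$ tends to infinity: a persistent null direction would, passing to the backward orbit, violate the contraction of $\overline{\mathcal{C}^{\prime}}$ forced by $\sigma(L_n)\to\infty$. This is the technical heart of \cite[Theorem 6.8]{LW92}. Combining the open-cone estimate with this boundary analysis yields strict unboundedness on all of $\overline{\mathcal{C}}\setminus\{\vec{0}\}$ and closes the equivalence.
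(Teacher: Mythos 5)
The paper does not prove this statement: it is quoted verbatim from \cite[Theorem 6.8]{LW92}, so there is no internal argument to compare against. Judged on its own, your proof of $(1)\Rightarrow(2)$ is complete and correct: the two monotonicity observations, the normalization $\sigma(L_n)^2=\inf_{Q(v)=1}Q(L_nv)$, and the dichotomy between a bounded subsequence of near-minimizers (whose limit lies in the open cone and has bounded $Q$-growth) and an unbounded one (whose renormalized limit lies on $\partial\mathcal{C}$ and has vanishing $Q$-growth) do contradict (\nameref{strict}) exactly as you say; your remark that the second case is where the closed cone is indispensable is the right observation. Note also that this is the only direction the present paper actually invokes (in the proof of Theorem \ref{abundance}), so the half you completed is the half that matters here.

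The direction $(2)\Rightarrow(1)$ has a genuine gap, and you have located it yourself: for $v\in\overline{\mathcal{C}}\setminus\{\vec 0\}$ with $Q(v)=0$ you offer a programme (``exploit the symplectic nature of the cocycle\ldots''), not an argument. Moreover the natural repair is circular as you have framed it. One would like to pick $n_0$ with $\sigma(L_{n_0})>1$, conclude $L_{n_0}v\in\mathcal{C}$, and then let the tail cocycle expand it; but the first step already needs the nontrivial fact that $\sigma(L)>1$ forces a monotone map to send $\overline{\mathcal{C}}\setminus\{\vec 0\}$ strictly inside $\mathcal{C}$, and the second step needs $\sigma(d_{T^{n_0}x}T^{k})\to\infty$, which does not follow formally from $\sigma(d_xT^{n})\to\infty$: the supermultiplicativity $\sigma(L_n)\geq\sigma(d_{T^{n_0}x}T^{n-n_0})\,\sigma(L_{n_0})$ points the wrong way, and $Q\bigl(d_{T^{n_0}x}T^{n-n_0}(L_{n_0}v)\bigr)=Q(L_nv)$ is precisely the quantity you are trying to control. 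What is missing is the quantitative content of Section 5 of \cite{LW92}: a lower bound for $Q(Lv)$ valid for \emph{all} $v\in\overline{\mathcal{C}}$, expressed in terms of $\sigma(L)$ and $\|v\|$ alone (not in terms of $Q(v)$, which degenerates on the boundary), with the bound tending to infinity as $\sigma(L)\to\infty$. With such a lemma the null case is one line, $Q(L_nv)\geq c(\sigma(L_n))\|v\|^2\to\infty$; without it the equivalence is established in only one direction.
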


\begin{remark}\label{csremark}
The strict unboundedness property can also be stated in negative time, i.e.
\begin{align}
\lim_{n \to -\infty} Q(d_xT^n v) = -\infty,\ \forall\ v \in \overline{\mathcal{C}^{\prime}(x)} \setminus
\{\vec{0}\}.\nonumber
\end{align}
Following the proof of \cite[Theorem 6.8]{LW92}, Theorem \ref{relation} also extends to this case, i.e.
\begin{enumerate}[resume]
\item The sequence $(d_{T^nx}T)_{n\in\mathbb{Z}_{-}}$ is strictly unbounded.\\[-0.3cm]
\item $\lim_{n \to \infty} \sigma_{\mathcal{C}^{\prime}}(d_xT^{-n}) = \infty$.\\[-0.3cm]
\end{enumerate}
\end{remark}

\section{Ergodicity}\label{sec:ergodicity}
Due to the theory of Katok-Strelcyn \cite{KS86} we know that our phase space decomposes into at most countably
many components on which the conditional smooth measure is ergodic. The strategy to prove ergodicity involves
two steps:
\begin{enumerate}
\item Proving local ergodicity (or the Local Ergodic Theorem), which implies that every ergodic component is a (mod 0) open set.\\[-0.3cm]
\item Proving that the set of sufficiently expanding points (Definition \ref{sufficiency}) is arcwise connected
and of full measure, which implies that any two (mod 0) open ergodic components can be connected with each
other, such that their intersection is of positive $\mu$-measure.
\end{enumerate}
The validity of both points above proves the existence of only one ergodic component of full measure.

\subsection{Local Ergodicity}
We use the Local Ergodic Theorem (LET) of \cite{LW92} and begin with the definition of a sufficiently expanding
point.
\begin{definition}\label{sufficiency}
A point $p \in \mathcal{M}$ is called \textit{sufficient}
(or \textit{sufficiently expanding}) if there exists a neighbourhood $\mathcal{U} = \mathcal{U}(p)$ and an integer
$\mathtt{N} = \mathtt{N}(p) >0$ such that either
\begin{enumerate}[resume]
\item $\mathcal{U} \cap \mathcal{S}_\mathtt{N}^- = \varnothing$ and $\sigma(d_yT^\mathtt{N}) > 3$,
for all $y \in T^{-\mathtt{N}}\mathcal{U}$, or \label{suff1}
\item $\mathcal{U} \cap \mathcal{S}_\mathtt{N}^+ = \varnothing$ and
$\sigma_{\mathcal{C}^{\prime}}(d_yT^{-\mathtt{N}}) > 3$, for all $y \in T^{\mathtt{N}} \mathcal{U}$.
\label{suff2}
\end{enumerate}
\end{definition}

Note, that in the sufficiency definition the requirements
$\mathcal{U} \cap \mathcal{S}_\mathtt{N}^- = \varnothing$ in (\ref{suff1}) and
$\mathcal{U} \cap \mathcal{S}_\mathtt{N}^+ = \varnothing$ in (\ref{suff2}) additionally
demand, that the orbit meets no singular manifold in the first $\mathtt{N}(p)-1$ iterates.

The LET amounts to showing that around a sufficient point, it is possible to find an
open neighbourhood, which lies (mod 0) in one ergodic component.
\begin{local}\label{local}
Let $p \in \mathcal{M}$ be a sufficient point and let $\mathcal{U} = \mathcal{U}(p)$ be the
neighbourhood from Definition \ref{sufficiency}. Suppose conditions \ref{C1} - \ref{C5} below
are satisfied.
\begin{enumerate}
\item[\mylabel{C1}{(C1)}] $(Regularity\ of\ singularity\ manifolds):$ The sets $\mathcal{S}_n^+$ and $\mathcal{S}_n^-$,
$n \geq 1$, are regular subsets.\footnote{For the definition of a regular subset refer to \cite[Definition 7.1]{LW92}}

\item[\mylabel{C2}{(C2)}] $(Non-contraction\ property):$ There exists $\zeta > 0$, such that
\begin{enumerate}
\item for every $n \geq 1$, $x \in \mathcal{M} \setminus \mathcal{S}_n^+$, and
$(\delta \xi, \delta \eta) \in \overline{\mathcal{C}(x)}$, we have
\begin{align}
\|d_xT^n(\delta \xi, \delta \eta)\| \geq \zeta \|(\delta \xi, \delta \eta)\|,\nonumber
\end{align}
\item for every $n \geq 1$, $x \in \mathcal{M} \setminus \mathcal{S}_n^-$, and
$(\delta \xi, \delta \eta) \in \overline{\mathcal{C}^{\prime}(x)}$, we have
\begin{align}
\|d_xT^{-n}(\delta \xi, \delta \eta)\| \geq \zeta \|(\delta \xi, \delta \eta)\|.\nonumber
\end{align}
\end{enumerate}

\item[\mylabel{C3}{(C3)}] $(Continuity\ of\ Lagrangian\ subspaces):$ The ordered pair of transversal Lagrangian
subspaces $(L_{1}(x)$, $L_{2}(x))$ varies continuously in $\operatorname{int}\mathcal{M}$.

\item[\mylabel{C4}{(C4)}] $(Chernov-Sinai\ ansatz):$ For $\mu_{\mathcal{S}^{\mp}}$-a.e. $x \in \mathcal{S}^{\mp}$,
\begin{align}
\lim_{n \to \pm \infty} Q(d_x T^n v) = \pm \infty, \nonumber
\end{align}
for every $v \in \overline{\mathcal{C}(x)} \setminus \{0\}$, if $x \in \mathcal{S}^-$ and for every
$v \in \overline{\mathcal{C}^{\prime}(x)} \setminus \{0\}$, if $x \in \mathcal{S}^+$.

\item[\mylabel{C5}{(C5)}] $(Proper\ alignment):$ There exists $\mathtt{M} \geq 0$, such that for every
$x \in \mathcal{S}^{+}$ resp. $\mathcal{S}^{-}$, we have $d_xT^{-\mathtt{M}} v_{x}^{+}$ resp.
$d_xT^{\mathtt{M}} v_{x}^{-}$ belong to $\overline{\mathcal{C}^\prime (T^{-\mathtt{M}} x)}$ resp.
$\overline{\mathcal{C}(T^{\mathtt{M}} x)}$, where $v_x^+$ resp. $v_x^-$ are the characteristic
lines\footnote{The characteristic line
$v_{x}^{\pm}$ is a vector of $\mathcal{T}_x\mathcal{S}^{\pm}$ that has the property of
annihilating every other vector $w \in \mathcal{T}_x\mathcal{S}^{\pm}$ with respect to the
symplectic form $\omega$, i.e. $\omega(v_{x}^{\pm},w) = 0$, $\forall\ w \in \mathcal{T}_x\mathcal{S}^
{\pm}$. Alternatively stated, it is the $\omega$-orthogonal complement of $\mathcal{T}_x\mathcal{S}^{
\pm}$. Note, that in symplectic geometry the $\omega$-orthogonal complement of a codimension one
subspace is one dimensional.} of $\mathcal{T}_x\mathcal{S}^{+}$ resp. $\mathcal{T}_x\mathcal{S}^{-}$.
\end{enumerate}
Then, the open neighbourhood  $\mathcal{U}(p)$ is contained (mod 0) in one ergodic component.
\end{local}

\subsection{Abundance of sufficiently expanding points}\label{sec:ab}
The notion of a sufficiently expanding point was given in Definition \ref{sufficiency}.
Once local ergodicity is established we deduce that every ergodic component is (mod 0) open.
One possibility to obtain a single ergodic component is
\begin{theorem}[Abundance of sufficiently expanding points]\label{abundance}
	The set of sufficiently expanding points has full measure and is arcwise connected.
\end{theorem}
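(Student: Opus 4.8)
The plan is to show, in order, that the set $\mathcal{G}$ of sufficiently expanding points is open, that it contains the complement of the forward--backward singular set, and that the small set it omits cannot disconnect $\mathcal{M}$.

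First I would record that sufficiency is an \emph{open} property. If $p$ satisfies alternative (\ref{suff1}) with data $(\mathcal{U},\mathtt{N})$, then the two requirements $\mathcal{U}\cap\mathcal{S}_\mathtt{N}^{-}=\varnothing$ and $\sigma(d_yT^{\mathtt{N}})>3$ for all $y\in T^{-\mathtt{N}}\mathcal{U}$ refer to $\mathcal{U}$ alone and not to the base point, so \emph{every} $p'\in\mathcal{U}$ is sufficient with the same data; the same holds for (\ref{suff2}). Hence $\mathcal{G}$ is open. Next I would identify which points lie in $\mathcal{G}$. Write $\mathcal{S}_\infty^{\pm}=\bigcup_{n\geq1}\mathcal{S}_n^{\pm}$ for the set of points whose forward (resp. backward) orbit ever meets a singularity. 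I claim $p\notin\mathcal{S}_\infty^{-}$ forces $p\in\mathcal{G}$ through (\ref{suff1}). Since the $\mathcal{S}_\mathtt{N}^{-}$ are closed (regularity of singularity manifolds, \ref{C1}) and $p$ avoids all of them, each may be separated from $p$ by a neighbourhood; on it $T^{\mathtt{N}}$ is a diffeomorphism by (\ref{symplectomorphism}), so $\sigma(d_\bullet T^{\mathtt{N}})$ is continuous near $T^{-\mathtt{N}}p$. It then remains to check $\sigma(d_{T^{-\mathtt{N}}p}T^{\mathtt{N}})\to\infty$; by the standard duality $\sigma(d_{T^{-n}p}T^{n})=\sigma_{\mathcal{C}'}(d_pT^{-n})$ for $Q$-monotone symplectic maps, this is exactly the content of Remark \ref{csremark} at $p$, valid under Assumption (\nameref{strict}). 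Choosing $\mathtt{N}$ with this quantity $>3$ and shrinking the neighbourhood verifies (\ref{suff1}). The symmetric argument, using $\sigma_{\mathcal{C}'}(d_{T^{n}p}T^{-n})=\sigma(d_pT^{n})\to\infty$ (Theorem \ref{relation} at $p$), shows $p\notin\mathcal{S}_\infty^{+}$ forces $p\in\mathcal{G}$ through (\ref{suff2}). Taking contrapositives, $\mathcal{M}\setminus\mathcal{G}\subseteq\mathcal{S}_\infty^{+}\cap\mathcal{S}_\infty^{-}$.

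Full measure is then immediate: $\mathcal{S}^{-}$ is a codimension-one submanifold and hence $\mu$-null, $T$ preserves $\mu$, so $\mathcal{S}_\infty^{-}=\bigcup_{n}T^{n}\mathcal{S}^{-}$ is a countable union of null sets; as $\mathcal{G}\supseteq\mathcal{M}\setminus\mathcal{S}_\infty^{-}$, the set $\mathcal{G}$ has full measure.

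For arcwise connectedness I would exploit the sharper inclusion $\mathcal{M}\setminus\mathcal{G}\subseteq\bigcup_{j,k\geq0}\bigl(T^{-k}\mathcal{S}^{+}\cap T^{j}\mathcal{S}^{-}\bigr)$. Each $T^{-k}\mathcal{S}^{+}$ and each $T^{j}\mathcal{S}^{-}$ is a codimension-one submanifold, so once the forward and backward singularity manifolds are known to meet transversally, every set in this countable union has codimension at least two; and a countable union of codimension-$\geq2$ $C^{1}$ submanifolds cannot locally separate the connected manifold $\mathcal{M}$, since any arc may be pushed off it by an arbitrarily small transversal perturbation. Hence $\mathcal{G}$, as the complement of such a set, is arcwise connected. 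The main obstacle is precisely this transversality of the two families: one must exclude tangencies between the stable-type sheets $T^{j}\mathcal{S}^{-}$ and the unstable-type sheets $T^{-k}\mathcal{S}^{+}$. This is where the geometry of the cone field is decisive, since $\mathcal{T}\mathcal{S}^{-}$ and $\mathcal{T}\mathcal{S}^{+}$ sit on opposite ($\overline{\mathcal{C}}$- and $\overline{\mathcal{C}'}$-) sides of the field and the non-contraction estimate \ref{C2} keeps the two uniformly separated under iteration; a secondary, purely topological point is to confirm that $\mathcal{M}$, or each of its connected components treated separately, is itself connected.
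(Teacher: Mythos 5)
Your proposal is correct and follows essentially the same route as the paper: strict unboundedness (via Theorem \ref{relation} and Remark \ref{csremark}) makes the least expansion coefficients diverge, so non-sufficient points are confined to the intersection of the forward and backward singular sets, which by proper alignment \ref{C5} is transversal and hence of codimension at least two, yielding full measure and arcwise connectedness. Your write-up is in places more explicit than the paper's (openness of the sufficiency property, the duality $\sigma(d_{T^{-n}p}T^{n})=\sigma_{\mathcal{C}'}(d_pT^{-n})$ used to anchor the divergence at $p$, and the need for transversality of all iterates $T^{-k}\mathcal{S}^{+}$ and $T^{j}\mathcal{S}^{-}$ rather than just $\mathcal{S}^{+}$ and $\mathcal{S}^{-}$), but the underlying argument is the same.
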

The abundance of sufficiently expanding points can be proven at once by requiring the strict unboundedness 
assumption (\nameref{strict}), the proper alignment property \ref{C5}
and the explicit construction of the neighbourhood lying in one ergodic component from the LET in the beginning 
of Section 8 in \cite{LW92}.
\begin{proof}[Proof of Theorem \ref{abundance}]
Recall that a point $x \in \mathcal{M}$ is sufficient if there exists a positive integer
$\mathtt{N} = \mathtt{N}(x) > 0$,
such that either (\ref{suff1}) or (\ref{suff2}) from Definition \ref{sufficiency} are satisfied.
Due to the strict unboundedness (\nameref{strict}), Theorem \ref{relation} and Remark \ref{csremark}, $\sigma(d_xT^n)$ 
and $\sigma_{\mathcal{C}^{\prime}}(d_xT^{-n})$ diverge to infinity for every $x \in \mathcal{M}$. Therefore,
every orbit which experiences at most one singular collision satisfies either
\begin{enumerate}[resume]
\item[(5)] $\sigma(d_x T^{\mathtt{N}(x)}) > 3$, $T^k\mathcal{U} \cap \mathcal{S}^+ = \varnothing$,
$0 \leq k \leq \mathtt{N}(x)$, or\\[-0.3cm]
\item[(6)] $\sigma_{\mathcal{C}^{\prime}}(d_xT^{-\mathtt{N}(x)}) > 3$, $T^{-k}\mathcal{U} \cap \mathcal{S}^- = \varnothing$, $0 \leq k \leq \mathtt{N}(x)$.
\end{enumerate}
It follows that the only non-sufficient orbits lie in a subset of double singular
collisions. Due to the proper alignment property \ref{C5}, $\mathcal{S}^+$ and
$\mathcal{S}^-$ are transversal for
every point, thus, the points of double singular collisions form a set of (at least)
codimension two.
Hence, there is an arcwise connected set of measure one such that the least expansion
coefficient is larger than three. For the last part, the proof follows the beginning
of Section 8 in \cite{LW92}:

Without loss of generality assume that
$\sigma_{\mathcal{C}^{\prime}}(d_xT^{\mathtt{N}}) > 3$. We can choose a small enough neighbourhood
$\mathcal{U}$ around the point $x$ such that $T^{\mathtt{N}}: T^{-\mathtt{N}}\mathcal{U} \to \mathcal{U}$ is a
diffeomorphism. This implies that $\mathcal{U} \cap \mathcal{S}_{\mathtt{N}}^- = \varnothing$ and $
T^{-\mathtt{N}}\mathcal{U} \cap \mathcal{S}_{\mathtt{N}}^+ = \varnothing$.
Further, the functional $y \mapsto \sigma(d_yT^{\mathtt{N}})$ is continuous on $\mathcal{U}$ and by making
$\mathcal{U}$ smaller, if necessary, we obtain $\sigma(d_yT^{\mathtt{N}}) > 3$, for every
$y \in T^{-\mathtt{N}}\mathcal{U}$.
\end{proof}

\section{Uniform lower bound of velocity differences}
The investigation regarding a uniform lower bound of velocity differences
$v_i^- - v_{i+1}^-$, for any $i \in \{ 1, \ldots, N-1\}$, is of main interest for the
non-contraction property. Denote by $(i,i+1)$ the collision between ball $i$ and ball 
$i+1$, i.e. when $q_i = q_{i+1}$.

Let $x = x(t) \in \mathcal{M}_{i+1}$, $i \in \{1, \ldots N-1\}$. The velocity difference
$v_i^-(t) - v_{i+1}^-(t)$ is non-negative and due to the collision laws
(\ref{collisionqp}), changes sign after the collision, i.e.
\begin{align}\label{sign}
 0 \leq v_i^-(t) - v_{i+1}^-(t) = -(v_i^+(t) - v_{i+1}^+(t)).
\end{align}
The Hamiltonian equations imply, that during free flight this quantity remains preserved
(\ref{equations}). Using (\ref{equations}), (\ref{collisionqp}), we see that the term
$v_i^-(t) - v_{i+1}^-(t)$ is only affected by a $(i-1,i)$ resp. $(i+1,i+2)$ collision when
being expanded backwards, i.e.
\begin{align}\label{sandwich}
  \begin{array}{rcl}
 0 \leq v_i^-(t) - v_{i+1}^-(t) &=& (1 + \gamma_{i-1})(v_{i-1}^-(t_c) -
 v_{i}^-(t_c))\\[0.2cm]
 &+& (v_i^-(t_c) - v_{i+1}^-(t_c)),\
 \text{resp.}\\[0.3cm]
 0 \leq v_i^-(t) - v_{i+1}^-(t) &=& (1 - \gamma_{i+1})(v_{i+1}^-
 (t_c) - v_{i+2}^- (t_c))\\[0.2cm]
 &+& (v_i^-(t_c) - v_{i+1}^-(t_c)),
  \end{array}
\end{align}
where $t_c < t$ is the collision time of the $(i-1,i)$ resp. $(i+1,i+2)$ collision.
Since we stopped our expansion right before a $(i-1,i)$ resp.
$(i+1,i+2)$ collision, we have
\begin{align}\label{pre}
  v_{i-1}^-(t_c) - v_{i}^-(t_c) \geq 0,\quad
  v_{i+1}^-(t_c) - v_{i+2}^-(t_c) \geq 0.
\end{align}
Formula (\ref{sandwich}) can be generalized in the following way.
Let $t_1 < t_2$ be collision times of two successful $(i,i+1)$ collisions and
$m, n \in \mathbb{N}$. Assume that in between those two $(i,i+1)$ collisions we
have $m$ $(i-1,i)$ collisions and $n$ $(i+1,i+2)$ collisions, with collision
times $r_1, \ldots, r_m$ and  $u_1, \ldots u_n$. Expanding only the $(i,i+1)$ velocity
difference backwards without changing the appearing $(i-1,i)$ and $(i+1,i+2)$ 
velocity differences, we obtain for $i \geq 2$
\begin{align}\label{sandwichsum}
  \begin{array}{rcl}
  0 &\leq& v_i^-(t_2) - v_{i+1}^-(t_2)\\[0.3cm]
  &=& (1 + \gamma_{i-1})\sum_{j = 1}^{m}
  \bigl(v_{i-1}^-(r_j)- v_{i}^-(r_j)\bigr)\\[0.3cm]
  &+& (1 - \gamma_{i+1})\sum_{l = 1}^{n}
  \bigl(v_{i+1}^-(u_l) - v_{i+2}^- (u_l)\bigr)\\[0.3cm]
  &+& v_i^+(t_1) - v_{i+1}^+(t_1).
  \end{array}
\end{align}
In between two $(1,2)$ collisions, we assume to have one floor collision, 
$m$ full returns to the floor of the lowest ball and $n$ $(2,3)$ collisions, 
again with collision times $r_1, \ldots, r_m$ and  $u_1, \ldots u_n$. 
Expanding $(1,2)$ at $t_2$ backwards yields
\begin{align}\label{sandwichsum12}
  \begin{array}{rcl}
  0 &\leq& v_1^-(t_2) - v_{2}^-(t_2)\\[0.3cm]
  &=& 2\sum_{j = 1}^{m}2jv_1^+(r_j)\\[0.3cm]
  &+& (1 - \gamma_{2})\sum_{l = 1}^{n}
  \bigl(v_{2}^-(u_l) - v_{3}^- (u_l)\bigr)\\[0.3cm]
	&+& 2\sqrt{(v_1^+(t_1))^2 + 2q_1(t_1)} + v_1^+(t_1) - v_{2}^+(t_1).
  \end{array}
\end{align}
If there is at least one floor collision between two $(1,2)$ collisions, then 
the square root term $2\sqrt{(v_1^+(t_1))^2 + 2q_1(t_1)}$ appears in (\ref{sandwichsum12}). 
The latter is part of the time the lowest ball needs to fall to the floor after a $(1,2)$ collision.\footnote{
The exact time the lowest ball needs to fall to the floor is $v_1^+(t_1)+\sqrt{(v_1^+(t_1))^2 + 2q_1(t_1)}$.}
\footnote{Note, that we can have a floor collision between two $(1,2)$ collision without 
a full return of the lowest ball to the floor, i.e. the square 
root term is present in (\ref{sandwichsum12}) but $m = 0$ in the first sum.}

Remember, that
\begin{align}\label{pre01}
v_1^+(r_j) \geq 0,\ \forall\ j \in \{1,\ldots,m\},
\end{align}
since this is the velocity of the first ball right after taking off from the floor.

At the heart of this work lies the following
\begin{theorem}\label{heart}
  There exists a constant $C > 0$, such that for every $x \in \mathcal{M}$ and every
  $i \in \{1, \ldots, N-1\}$, there exists a divergent sequence of collision times
  $(\mathtt{t}_n)_{n \in \mathbb{N}} = (\mathtt{t}_n(x,i))_{n \in \mathbb{N}}:$
  $v_i^-(\mathtt{t}_n) - v_{i+1}^-(\mathtt{t}_n) \geq C$.
\end{theorem}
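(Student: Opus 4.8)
The plan is to prove the statement uniformly over all orbits by first reducing it, for every index, to producing a definite lower bound at infinitely many collisions of a \emph{single} adjacent pair, and then propagating that bound to all pairs through the telescoping identities \eqref{sandwichsum}, \eqref{sandwichsum12}. Throughout I use that energy conservation on $E_c \cap \mathbf{N}_q$ bounds all heights ($q_i \le c/m_i$) and all speeds ($m_i v_i^2 \le 2c$), so every pre-collisional difference $v_i^- - v_{i+1}^-$ lies in a fixed compact interval $[0,W]$ with $W = W(c,m_1,\dots,m_N)$. I also record the structural fact underlying \eqref{sandwich}: immediately after an $(i,i+1)$ collision one has $v_i^+ - v_{i+1}^+ = -(v_i^- - v_{i+1}^-) \le 0$ by \eqref{sign}, and since \eqref{equations} imposes the common acceleration $-1$, the quantity $v_i - v_{i+1}$ stays constant and nonpositive during free flight; hence two consecutive $(i,i+1)$ collisions are always separated by at least one intervening floor, $(i-1,i)$, or $(i+1,i+2)$ collision, so the sums in \eqref{sandwichsum}, \eqref{sandwichsum12} are never empty. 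That every pair collides infinitely often (so that a divergent sequence exists at all) follows from the momentum bookkeeping below, which forces the summed differences to grow linearly in time.

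Rewriting \eqref{sandwichsum} as
\begin{align}
(v_i^-(t_2) - v_{i+1}^-(t_2)) + (v_i^-(t_1) - v_{i+1}^-(t_1)) = (1+\gamma_{i-1})\sum_{j=1}^m (v_{i-1}^-(r_j) - v_i^-(r_j)) + (1-\gamma_{i+1})\sum_{l=1}^n (v_{i+1}^-(u_l) - v_{i+2}^-(u_l)),\nonumber
\end{align}
and using $\gamma_j \in (0,1)$, I read off the propagation mechanism: if some intervening $(i-1,i)$ collision has difference $\ge C'$, then one of the two bounding $(i,i+1)$ collisions has difference $\ge \tfrac12(1+\gamma_{i-1})C'$ (symmetrically for an intervening $(i+1,i+2)$ collision, with factor $\tfrac12(1-\gamma_{i+1})$). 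Since each pair collides infinitely often and only finitely many collisions occur in any bounded time interval, an infinite supply of $\ge C'$ collisions of one pair is distributed among infinitely many gaps of an adjacent pair, and therefore yields an infinite supply of $\ge C''$ collisions of that adjacent pair, with $C'' > 0$ depending only on the masses. Iterating this at most $N-1$ times, upward and, if necessary, downward, it suffices to exhibit a single constant $C_0 > 0$ and a single pair for which infinitely many collisions have difference $\ge C_0$; the final $C$ is the minimum of the finitely many propagated constants.

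For the engine I turn to the bottom pair and the floor. Specialising \eqref{sandwichsum12} to a gap containing a floor collision and discarding the nonnegative sums yields
\begin{align}
(v_1^-(t_2) - v_2^-(t_2)) + (v_1^-(t_1) - v_2^-(t_1)) \ge 2\sqrt{(v_1^+(t_1))^2 + 2q_1(t_1)},\nonumber
\end{align}
whose right-hand side equals $2s$, where $s = \sqrt{(v_1^+(t_1))^2 + 2q_1(t_1)}$ is the speed with which the lowest ball first strikes the floor after the collision at $t_1$. Writing $h_1 = \tfrac12 m_1 v_1^2 + m_1 q_1$ for the private energy of the lowest ball, conserved in free flight and across floor bounces and altered only at $(1,2)$ collisions, one has $s = \sqrt{2h_1/m_1}$ at the floor. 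Thus if $h_1 \ge \tfrac12 m_1 \delta^2$ at infinitely many floor impacts, infinitely many $(1,2)$ collisions have difference $\ge \delta$ and we are done. Otherwise $h_1 < \tfrac12 m_1\delta^2$ throughout every inter-$(1,2)$-collision interval containing a floor bounce, confining the lowest ball to $q_1 < \delta^2/2$ with $|v_1| < \delta$; but then each such $(1,2)$ collision occurs with the second ball descending onto an almost motionless first ball, so $v_1^- - v_2^- \approx |v_2^-|$ is bounded below unless the second ball is itself nearly motionless and low. Propagating this alternative upward, the only way to avoid a definite difference at some pair is for \emph{all} balls to be simultaneously pinned near the floor with vanishing speed, which forces the total energy to $0$ and contradicts $H \equiv c > 0$.

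The main obstacle is exactly this last, quantitative, step: converting the heuristic ``not all balls can be co-moving and low'' into a uniform $C_0 = C_0(c,m_1,\dots,m_N)$ valid along every orbit. I expect to handle it by a compactness argument on the compact surface $E_c \cap \mathbf{N}_q$, together with continuity of the first-return map off the singularity set, showing that within a bounded number of collisions a definite fraction of the energy $c$ must pass through a descending collision. An equivalent and perhaps more economical route is the momentum bookkeeping for the topmost ball: $m_N v_N$ is bounded, drains at the fixed rate $m_N$ during free flight, and is incremented only by $m_N(1+\gamma_{N-1})(v_{N-1}^- - v_N^-)$ at each $(N-1,N)$ collision, whence $(1+\gamma_{N-1})(v_{N-1}^-(s_k) - v_N^-(s_k)) \ge (s_k - s_{k-1}) - 2\sqrt{2c/m_N}$ for consecutive top-collision times $s_{k-1} < s_k$; hence any inter-collision gap exceeding $2\sqrt{2c/m_N}$ already delivers a difference bounded below, and the task reduces to ruling out a super-linearly growing top-collision rate, which is again the co-moving pathology seen from the other side. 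Once $C_0$ is secured, the propagation of the second paragraph finishes the proof.
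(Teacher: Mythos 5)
Your overall strategy coincides with the paper's: use the telescoping identities \eqref{sandwichsum}, \eqref{sandwichsum12} to propagate a lower bound from one adjacent pair to all the others, use the square-root (floor) term in \eqref{sandwichsum12} as the source of that bound, and make the endgame the observation that the only escape is ``all balls low and slow,'' contradicting $H \equiv c$. Your propagation step is sound and matches the paper's derivation of $C_2,\ldots,C_{N-1}$ from $C_1$. The genuine gap is that you never prove the engine: you explicitly defer the quantitative version of ``not all balls can be co-moving and low'' to a compactness argument, or to a momentum bookkeeping for the top ball, both of which you only sketch. That step is not a technicality --- it is the entire content of the theorem. In particular, your chain ``the second ball is nearly motionless and low, propagate upward'' controls $q_2$ only because $q_2=q_1$ at a $(1,2)$ collision; the positions $q_3,q_4,\ldots$ are not controlled at that moment, so one must pass to subsequent $(2,3),(3,4),\ldots$ collisions and argue, as the paper does via the collision-time formula $(q_{i+1}-q_i)/(v_i-v_{i+1})$, that small velocity differences force small separations on pain of unboundedly long free flights. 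The paper closes this by fixing a finite collision window $[t_{-(1,2)},t_{(1,2)}]$ containing one collision of every type plus a floor collision, assuming every $(1,2)$ difference in it is $<\varepsilon$, and deriving $H<\varepsilon$ at a single time --- a finite, self-contained contradiction that also yields the uniformity of the constant.

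Two further cautions. First, you invoke ``only finitely many collisions occur in any bounded time interval'' to distribute infinitely many good $(i-1,i)$ collisions over infinitely many $(i,i+1)$ gaps; in this paper that finiteness (Lemma \ref{collisionbound}) is itself deduced \emph{from} Theorem \ref{heart}, so you must either avoid it or establish it independently --- the references \cite{G78,V79} cover only the floor-free case, and \cite{HT19} notes that floor collisions can accumulate in finite time. Second, the theorem asserts a single constant $C$ uniform over $\mathcal{M}$ and over $i$; an orbit-dependent bound obtained from ``$h_1$ is bounded below at infinitely many floor impacts along this orbit'' does not suffice, and the compactness argument you gesture at to repair this is precisely what needs to be written out.
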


\textit{Outline of the proof}: We start describing a certain collision pattern.
Since every collision happens infinitely often, this collision pattern can be found 
(non-uniquely) infinitely often in every orbit. 
The time interval of this pattern in the proof below is given by $[t_{-(1,2)}, t_{(1,2)}]$. 
At $t_{-(1,2)}$, $t_{(1,2)}$ we have a $(1,2)$ collision and somewhere in between 
is at least one $(0,1)$ collision. 

We start a proof by contradiction assuming that 
every pre-collisional velocity difference of $(1,2)$ collisions in $[t_{-(1,2)}, t_{(1,2)}]$ 
is arbitrarily small. Using the above formulas (\ref{sandwich}) - (\ref{sandwichsum12}) this will amount to 
having every ball arbitrarily close to the floor with velocities being arbitrarily close 
to each other at time $t_{-(1,2)}$. Since there is at least one $(0,1)$ collision between 
the two $(1,2)$ collisions at $t_{-(1,2)}$, $t_{(1,2)}$  the square root term in (\ref{sandwichsum12}) 
exists. This implies that at $t_{-(1,2)}$ all the balls will have arbitrarily small velocities, 
which results in a contradiction since the energy of the system would be arbitrarily small. 
Hence, the velocity difference $v_1^- - v_2^-$ of at least one $(1,2)$ collision 
in $[t_{-(1,2)}, t_{(1,2)}]$ is bounded from below. 

Repeatedly using the above formulas, 
we obtain lower bounds for at least one velocity difference $v_i^- - v_{i+1}^-$, 
for every $i \in \{1,\ldots,N-1\}$. Since this collision pattern appears infinitely often along 
every orbit we can extend these considerations obtaining the result from Theorem \ref{heart}.

\begin{proof}
Pick an arbitrary $(1,2)$ collision and mark the time as $t_{-(1,2)}$. Then, pick
the next $(2,3)$ collision in the future and mark the time as $t_{-(2,3)}$.
Continuing this procedure for the next $(3,4), \ldots (N-1,N)$ collisions, gives us
collision times $t_{-(3,4)}, \ldots, t_{-(N-1,N)}$. After that we pick the first 
$(0,1)$ collision and mark its collision time with $t_0$. 
We now reverse the order of collisions after
$t_0$ and mark the future collision times of the first consecutively appearing
$(N-1,N), \ldots, (1,2)$ collisions as $t_{(N-1,N)}, \ldots, t_{(1,2)}$. Note, that in the
intervals $[t_{-(i,i+1)},t_{-(i+1,i+2)}]$, $i \in \{1,\ldots,N-2\}$, 
exactly one $(i+1,i+2)$ collision occurs, while in the interval
$[t_{0},t_{(N-1,N)}]$ resp. $[t_{(i,i+1)},t_{(i-1,i)}]$, $i \in \{2,\ldots,N-1\}$, 
exactly one $(N-1,N)$ resp. $(i-1,i)$ collision occurs, 
but there is no restriction on other collisions happening.

The collision times of each $(i,i+1)$ collision, including floor collisions, 
induce a partition $\mathcal{P}_i$ of the time interval $[t_{-(1,2)},t_{(1,2)}]$: 
For every $i \in \{2, \ldots N-1\}$, there exists a positive
integer $n = n(i) \geq 2$, such that the collision times of all the $(i,i+1)$ collisions
in the interval $[t_{-(i,i+1)},t_{(i,i+1)}]$ are given by $s_{i,1}, \ldots, s_{i,n}$, 
with $s_{i,1} := t_{-(i,i+1)}$ and $s_{i,n} := t_{(i,i+1)}$. 
For $i = 1$, $n = n(1) \geq 0$, and by default $s_{1,0} = t_{-(1,2)}$, $s_{1,n+1} = t_{(1,2)}$.
For $i = 0$, $s_{0,1}, \ldots, s_{0,n}$, $n = n(0) \geq 1$, are simply the collision times 
of the lowest ball with the floor in the open interval $(t_{-(1,2)},t_{(1,2)})$.
We augment the collision time sequences by a first element $s_{i,0} := t_{-(1,2)}$
and a last element $s_{i,n+1} := t_{(1,2)}$, which yields the partitions
$\mathcal{P}_i = \bigcup_{k=0}^{n} [s_{i,k}, s_{i,k+1}]$, for every $i \in \{0,\ldots,N-1\}$. 

If at time $s_{i,k}$ of our partition, we face a singular collision, we might have to repartition. 
The details of this procedure are described in the last four paragraphs of the proof.

Assume that for every $\varepsilon > 0$ and every
$k \in \{0, \ldots, n+1\}$, where $n = n(1)$, we have
\begin{align}\label{assume1}
  v_{1}^{-}(s_{1,k}) - v_{2}^{-}(s_{1,k}) < \varepsilon,
\end{align}
that is, the velocity differences right before every $(1,2)$ collision
in $[t_{-(1,2)},t_{(1,2)}]$ are arbitrarily small. 

In order to apply (\ref{sandwichsum12}) we need to quantify how many $(2,3)$ collisions 
and floor returns of the lowest ball are in between two successful $(1,2)$ collisions. 
We introduce, for 
$i \in \{0,\ldots,N-1\}$, $j \in \{1,\ldots, N-1\}$, $k \in \{0,\ldots,n\}$, 
where $n = n(j)$, the functional
\begin{align}
  \mathtt{c}_{i}:\ &\mathcal{P}_j \to \mathbb{N}\nonumber\\
  &[s_{j,k}, s_{j,k+1}] \mapsto \mathtt{c}_{i}([s_{j,k}, s_{j,k+1}]) =: \mathtt{c}_{i,j,k}.\nonumber
\end{align}
The term $\mathtt{c}_{i,j,k}$ counts how many $(i,i+1)$ collisions appear in the interval
$[s_{j,k}, s_{j,k+1}]$ of the partition $\mathcal{P}_j$, i.e. in between two
successful $(j,j+1)$ collisions happening at time $s_{j,k}$ and $s_{j,k+1}$. Applying this
notation, we expand the velocity differences in (\ref{assume1}) backwards and according to
(\ref{sandwichsum12}) obtain for every $k \in \{1, \ldots, n(1)+1\}$
\begin{align}\label{sandwichsumapplied}
  \begin{array}{rcl}
  0 &\leq& v_{1}^{-}(s_{1,k}) - v_{2}^{-}(s_{1,k})\\[0.3cm]
  &=& 2\sum_{j = 1}^{\mathtt{c}_{0,1,k}}2jv_1^+(s_{0,g_0(j)})\\[0.3cm]
  &+& (1 - \gamma_{2})\sum_{l = 1}^{\mathtt{c}_{2,1,k}}
  \bigl(v_{2}^-(s_{2,g_2(l)}) - v_{3}^- (s_{2,g_2(l)})\bigr)\\[0.3cm]
	&+& 2\sqrt{(v_1^+(s_{1,k-1}))^2 + 2q_1(s_{1,k-1})} + v_1^+(s_{1,k-1}) - v_{2}^+(s_{1,k-1}),
  \end{array}
\end{align}
where the functions $g_0(j) \in \{1, \ldots, n(0)\}$ and $g_2(l) \in \{1, \ldots, n(2)\}$ 
enumerate the collision times subindices.
Using (\ref{sandwichsumapplied}) together with (\ref{pre}), (\ref{pre01}) and our assumption (\ref{assume1}),
implies for every $\varepsilon > 0$,
\begin{subequations}
	\begin{eqnarray}
  &&v_{2}^-(s_{2,k}) - v_{3}^-(s_{2,k}) < \varepsilon,\ \forall\ k \in \{1,\ldots,n(2)\},\label{implication1}
	\\[0.3cm]
	&&v_1^+(s_{0,k}) < \varepsilon,\ \forall\ k \in \{1,\ldots,n(0)\},\label{implication2}\\[0.3cm]
	&&v_1^+(s_{1,k-1}) < \varepsilon,\ q_1(s_{1,k-1}) < \varepsilon,\ \forall\ k \in \{1,\ldots,n(1)+1\}.
	\label{implication3}
	\end{eqnarray}
\end{subequations}
We repeat step (\ref{sandwichsumapplied}), by expanding the remaining 
velocity differences $v_{i}^-(s_{i,k}) - v_{i+1}^-(s_{i,k})$, for all 
$i \in \{2,\ldots,N-1\}$, $k \in \{2,\ldots,n(i)\}$ backwards. Using (\ref{pre}), (\ref{sandwichsum}), 
(\ref{implication1}), this leads to
\begin{align}\label{final1}
  v_{i}^-(s_{i,k}) - v_{i+1}^-(s_{i,k}) < \varepsilon,
\end{align}
for all $\varepsilon > 0$, $i \in \{2,\ldots,N-1\}$, $k \in \{2,\ldots,n(i)\}$. 
Every pre-collisional velocity difference $v_i^- - v_{i+1}^-$ occurring in 
$[t_{-(1,2)},s_{i,1})$ resp. $(s_{i,n},t_{(1,2)}]$ can be expanded 
forward resp. backward and by using (\ref{final1}) will be arbitrarily small 
as well. Therefore, (\ref{assume1}) implies that every ball to ball 
pre-collisional velocity difference in $[t_{-(1,2)},t_{(1,2)}]$ 
is arbitrarily small.

If the next ball to ball collision is $(i, i+1)$, $i \in \{1,\ldots,N-1\}$,
the collision time is given by
\begin{align}
\frac{q_{i+1} - q_{i}}{v_{i} - v_{i+1}}.\nonumber
\end{align}
If the denominator $v_{i} - v_{i+1}$ is arbitrarily small, $q_{i+1}$, $q_{i}$, has to be 
arbitrarily small as well, otherwise the collision time would be arbitrarily large, which would 
result in arbitrarily large velocities and contradict the finite energy assumption.
Since every velocity difference in $[t_{-(1,2)},t_{(1,2)}]$ is arbitrarily small, 
at time $t_{-(1,2)}$, all the balls are lying arbitrarily close to the floor with velocities 
being arbitrarily equal. 
Due to our construction, there is at least one $(0,1)$ collision in $[t_{-(1,2)},t_{(1,2)}]$.
Hence, the square root term in (\ref{sandwichsumapplied}) is present, which further implies 
(\ref{implication3}). Thus, at time $t_{-(1,2)}$, every ball lies arbitrarily close to the floor 
with arbitrarily small velocity. In this way, $H(q(t_0), p(t_0)) < \varepsilon$, for every $\varepsilon > 0$,
which means that our orbit would break through the constant energy surface.
Since this is impossible, we obtain a contradiction to our beginning assumption
(\ref{assume1}), hence, there exists a constant $C_1 > 0$
and at least one $k \in \{0, \ldots, n(1)+1\}$, such that
\begin{align}\label{final2}
  v_{1}^{-}(s_{1,k}) - v_{2}^{-}(s_{1,k}) \geq C_1.
\end{align}
In order to obtain the existence of a constant $C > 0$ and at least one 
$(i,i+1)$ collision, such that
$v_{i}^- - v_{i+1}^- \geq C$, for all  $i \in \{1, \ldots N-1\}$, we first pick the
previous resp. next $(2,3)$ collision before resp. after the $(1,2)$ collision 
in (\ref{final2}).
Let the past resp. future $(2,3)$ collision happen at $t_p$ resp. $t_f$. Using
(\ref{sandwichsum}) we expand $v_{2}^{-}(t_f) - v_{3}^{-}(t_f)$ backwards and
obtain
\begin{align}
	\begin{array}{rcl}
0 &<& v_{2}^{-}(t_f) - v_{3}^{-}(t_f)\\[0.3cm]
  &=& (1 + \gamma_{1})\sum_{j = 1}^{m}
  \bigl(v_{1}^-(r_j)- v_{2}^-(r_j)\bigr)\\[0.3cm]
  &+&(1 - \gamma_{3})\sum_{l = 1}^{n}
  \bigl(v_{3}^-(u_l) - v_{4}^- (u_l)\bigr)\\[0.3cm]	
	&+&  v_{2}^{+}(t_p) - v_{3}^{+}(t_p),\nonumber
	\end{array}
\end{align}
where $r_1,\ldots,r_m$ resp. $u_1,\ldots,u_n$ are the collision times of the $(1,2)$ 
resp. $(3,4)$ collisions in between the two $(2,3)$ collisions occurring at times $t_p$, $t_f$. 
Note, that the reason we denoted these collision times as $r_j$ resp. $u_l$ (and not $s_{1,j}$ resp. $s_{3,l}$)
is because one of the $(2,3)$ collisions may lie outside of $[t_{-(1,2)},t_{(1,2)}]$. This depends on the position 
of the $(1,2)$ collision at time $s_{1,k}$ from (\ref{final2}).

Assuming that both $(2,3)$ velocity differences in the past and future
are arbitrarily small yields a contradiction since
$v_{1}^{-}(s_{1,k}) - v_{2}^{-}(s_{1,k}) \geq C_1$. Hence, there exists 
a constant $C_2 > 0$, such that either $v_{2}^{-}(t_f) - v_{3}^{-}(t_f) \geq C_2$ or
$v_{2}^{-}(t_p) - v_{3}^{-}(t_p) \geq C_2$. Successfully continuing
this procedure we find positive constants $C_1, \ldots, C_{N-1} > 0$ and at least one 
$(i,i+1)$ collision, for all $i \in \{1,\ldots,N-1\}$, such that
\begin{align}\label{min}
  v_{i}^{-} - v_{i+1}^{-} \geq \min\{C_1,\ldots,C_{N-1}\}.
\end{align}
It follows from the way we obtained (\ref{min}), that the collision times of all 
$(i,i+1)$ collisions satisfying (\ref{min}) do not necessarily belong 
to $[t_{-(1,2)},t_{(1,2)}]$. 

The above steps can be repeated, thus, creating 
infinitely many compact intervals with a sequence of constant positive lower 
bounds for at least one $v_{1}^- - v_{2}^-$ per compact interval. This holds 
along every orbit. Those lower bounds have a global minimum, i.e.
\begin{align}
\min_{x\in\mathcal{M}}\ \min_{n\in\mathbb{N}}\ v_1^-(\mathtt{t}_n(x,1)) - v_2^-(\mathtt{t}_n(x,1))\nonumber
\end{align}
exists. Otherwise this would imply (\ref{final1}) and, hence, a contradiction. 
For this global lower bound we can repeat the steps from the last paragraph to obtain a global 
lower bound, say $C> 0$, for every pre-collisional velocity difference.

In the event of a singular collision between balls $i-1$, $i$, $i+1$, $i \geq 1$,
which happens at $s_{i-1,k} = s_{i,k}$, for some $k \in \{2,\ldots,n-1\}$, 
our orbit branches into two suborbits and the procedure above works for both branches, 
because there are further $(i-1,i)$, $(i+1,i)$ collisions flanking the singular 
collisions in the past and the future.
 
If the singularity occurs at the last possible collision time $s_{i-1,1} = s_{i,1}$ or
$s_{i-1,n} = s_{i,n}$, we have to repartition the collision times for one of the suborbits. 
We only outline $s_{i-1,1} = s_{i,1}$ since $s_{i-1,n} = s_{i,n}$ works in a similar way: 
If, for the first suborbit, the collision order is $(i-1,i) \to (i,i+1)$, nothing changes. If, 
$(i,i+1) \to (i-1,i)$, then we do not consider the $(i,i+1)$ collision but rather 
set $s_{i-1,1} = s_{i,1}$ to be the collision time of $(i-1,i)$. Then, continue as described 
in the beginning of the proof by picking the next collisions $(i,i+1), \ldots, (N-1,N)$ 
with corresponding (and possibly new) collision times $s_{i,1}, \ldots, s_{N-1,1}$. 

Note, that if $i = 1$ in the last paragraph we face no problem with either collision order 
$(0,1) \to (1,2)$, $(1,2) \to (0,1)$. In both cases we associate the collision time 
$s_{1,0} = t_{-(1,2)}$ with the $(1,2)$ collision (and in exactly the same manner, 
we associate $s_{1,n+1} = t_{(1,2)}$ either $(1,2)$ in the future).

The same procedure as in the last three paragraphs is initiated if the orbit experiences 
a singularity involving more than three balls.
\end{proof}

\begin{remark}
We want to bring to the readers attention, that it may be possible (depending on the dynamics), 
for some $x \in \mathcal{M}$, $i \in \{1,\ldots,N-1\}$, to have diverging collision time subsequences
$(\mathtt{u}_n)_{n\in\mathbb{N}}$, which satisfy, for instance
\begin{align}
\lim_{n\to\infty} v_i^-(\mathtt{u}_n(x,i)) - v_{i+1}^-(\mathtt{u}_n(x,i)) = 0.\nonumber
\end{align}
The importance is, that such a behaviour may only happen along a collision time subsequence, since 
there must be enough space left for $(\mathtt{t}_n)_{n\in\mathbb{N}}$ from 
Theorem \ref{heart} to exist.
\end{remark}

\section{The non-contraction property}\label{sec:noncon}

We begin this section by pointing out, that it is sufficient for the non-contraction property to hold if we 
only prove it for every $v \in \overline{\mathcal{C}(x)} \cap \partial B_{\|\cdot\|}(0,1)$, where 
$\partial B_{\|\cdot\|}(0,1)$ is the compact ball of unit radius, with respect to the norm $\|\cdot\|$, in 
tangent space.

Since the flow derivative between collisions is equal to the identity matrix, it is equivalent to formulate the 
non-contraction property in terms of the flow, i.e.
\begin{align}
\exists\ \zeta > 0,\ \forall\ t > 0 ,\ \forall\ x \in \mathcal{M} \setminus \mathcal{S}_t^+,\ \forall\ v \in 
\overline{\mathcal{C}(x)} \cap \partial B_{\|\cdot\|}(0,1):\ \|d_x \phi^{t} v\| \geq \zeta.\nonumber
\end{align}
We know \cite[Remark 10.3]{HT19}, that arbitrarily many $(0,1)$ collisions can occur in finite time. This is 
why we prefer to formulate the non-contraction property in terms of the flow, because we rather deal with 
finite times than arbitrarily many derivative map compositions.

Assume now that the strict unboundedness property (\nameref{strict}) holds for every point. 
We fix $E_0 > 0$ and define the function 
\begin{align}
	\tau_{E_0}^+:\ &\mathcal{M} \to \mathbb{R}_+,\nonumber\\
	&x \mapsto \tau_{E_0}^+(x),\nonumber
\end{align}
where
\begin{align}\label{tauplus}
	\tau_{E_0}^+(x) = \min\{t>0:\ Q(d_x \phi^t v) > E_0,\ \forall\ v \in \overline{\mathcal{C}(x)}\}.
\end{align}
The assumption of strict unboundedness (\nameref{strict}) together with the compactness of 
$\mathcal{M}$ and $\overline{\mathcal{C}(x)} \cap \partial B_{\|\cdot\|}(0,1)$ will 
help us to assert that $\tau_{E_0}^+$ is uniformly bounded from above, i.e.
\begin{align}\label{taubound}
\exists\ \mathtt{T} > 0, \forall\ x \in \mathcal{M}:\ \tau_{E_0}^+(x) \leq \mathtt{T}.
\end{align} 
This information is then utilized to split up the proof of the non-contraction 
property into two parts: First, we prove the non-contraction property for every collision of 
every feasible orbit in the fixed time interval $[0, \mathtt{T}]$ and, second, for every 
$t > \mathtt{T}$.

We begin with the proof of the uniform upper bound for $\tau_{E_0}^+$.
\begin{lemma}\label{taubound2}
	The function $\tau_{E_0}^+$ is uniformly bounded from above.
\end{lemma}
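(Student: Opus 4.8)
The plan is to argue by contradiction, recasting the statement as an assertion about a convergent sequence of phase points and tangent vectors, and then to play the monotonicity of $Q$ along orbits against the strict unboundedness assumption at the limit point. Throughout I use the reduction from the opening of this section: it suffices to work with unit vectors $v \in \overline{\mathcal{C}} \cap \partial B_{\|\cdot\|}(0,1)$, the cone field being constant, so that $\overline{\mathcal{C}(x)} = \overline{\mathcal{C}}$ for every $x$. The one structural fact I exploit repeatedly is that $t \mapsto Q(d_x\phi^t v)$ is nondecreasing: between collisions the flow derivative is the identity and hence preserves $Q$, while at each collision $d_x T$ is $Q$-monotone. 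Moreover, since $d_x\phi^t = d_x T^{k}$ with $k = k(t,x)$ the number of collisions in $(0,t]$, and every orbit collides infinitely often, Assumption (\nameref{strict}) yields the flow version $\lim_{t\to\infty} Q(d_x\phi^t v) = +\infty$ for every $x$ and every $v \in \overline{\mathcal{C}}\setminus\{\vec{0}\}$.

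Suppose $\tau_{E_0}^+$ is not uniformly bounded. Then for each $m$ there is $x_m \in \mathcal{M}$ with $\tau_{E_0}^+(x_m) > m$, so by definition of $\tau_{E_0}^+$ there is a unit vector $v_m \in \overline{\mathcal{C}}$ with $Q(d_{x_m}\phi^{m} v_m) \le E_0$; by monotonicity in time this forces $Q(d_{x_m}\phi^{S} v_m) \le E_0$ for every $S \le m$. By compactness of $\mathcal{M}$ and of $\overline{\mathcal{C}}\cap\partial B_{\|\cdot\|}(0,1)$ I pass to a subsequence with $x_m \to x_\ast$ and $v_m \to v_\ast$, where $v_\ast \in \overline{\mathcal{C}}$ is again a unit vector. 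The goal is to let $m\to\infty$ in the inequality $Q(d_{x_m}\phi^{S}v_m) \le E_0$ for each fixed $S$: if $(x,v)\mapsto Q(d_x\phi^S v)$ is continuous at $(x_\ast,v_\ast)$ for every $S$ outside a discrete set of times, then $Q(d_{x_\ast}\phi^{S}v_\ast) \le E_0$ for all such $S$, contradicting $\lim_{S\to\infty} Q(d_{x_\ast}\phi^{S}v_\ast) = +\infty$. This closes the argument modulo the continuity of the cocycle.

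The one genuinely delicate step, and the main obstacle, is precisely this continuity of the finite-time derivative cocycle $x \mapsto d_x\phi^S$ at the limit point $x_\ast$, and two difficulties must be dispatched. First, the number of collisions in $[0,S]$ must be locally finite: the ball-to-ball collisions are controlled by Lemma \ref{collisionbound} (a consequence of Theorem \ref{heart}), whose bound depends only on $S$ and the energy $c$, so on a neighbourhood of $x_\ast$ the derivative $d_x\phi^S$ is a product of a bounded number of ball-to-ball collision factors interlaced with floor-collision clusters; away from tangential and singular configurations the collision times and pre-collisional velocities depend continuously on $x$, and hence so does the product. Second, the orbit of $x_\ast$ may meet a singularity manifold in $[0,S]$, where the cocycle branches; I handle this by passing to a further subsequence along which all $x_m$ lie on one side of each such crossing, so that $d_{x_m}\phi^S$ converges to the derivative along the corresponding branch $\hat{x}$ of $x_\ast$. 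Applying Assumption (\nameref{strict}) to that branch then gives $\lim_{S\to\infty} Q(d_{\hat x}\phi^{S}v_\ast) = +\infty$, and the same contradiction results. Choosing each evaluation time $S$ inside a free-flight segment (so that $S$ is not a collision time of $\hat{x}$) guarantees that the limit is only ever passed at continuity points of the cocycle, which is what makes the argument go through.
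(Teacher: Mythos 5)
Your proof is correct and follows essentially the same route as the paper: negate the uniform bound, extract convergent subsequences $x_m \to x_\ast$, $v_m \to v_\ast$ by compactness of $\mathcal{M}$ and of $\overline{\mathcal{C}}\cap\partial B_{\|\cdot\|}(0,1)$, and contradict the strict unboundedness assumption at $(x_\ast,v_\ast)$. You are in fact more scrupulous than the paper at the one delicate point: the paper concludes $\lim_{t\to\infty}Q(d_{x_\ast}\phi^t v_\ast)\le E_0$ directly from "the role of $s_0$" without justifying the limit interchange, whereas you supply both the $Q$-monotonicity step that propagates the bound $Q(d_{x_m}\phi^S v_m)\le E_0$ to all $S\le m$ and the continuity/branch-selection argument for the finite-time cocycle that makes passing to the limit legitimate.
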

\begin{proof}
The assertion of strict unboundedness (\nameref{strict}) is equivalent to
\begin{align}
	\forall\ K \geq 0,\ \forall\ x \in \mathcal{M},\ \forall\ v \in 
\overline{\mathcal{C}(x)},\ \exists\ s_0 = s_0(K,x,v):\ Q(d_x \phi^t v) > K,\ \forall\ t \geq s_0.\nonumber
\end{align}
Since the $Q$-form is homogeneous (of degree two), the previous statement 
does not lose its general validity if we only assume it for  
$v \in \overline{\mathcal{C}(x)} \cap \partial B_{\|\cdot\|}(0,1)$.

We want to prove
\begin{align}
	\exists\ \mathtt{T} > 0,\ \forall\ x \in \mathcal{M},\ \forall\ v \in 
\overline{\mathcal{C}(x)} \cap \partial B_{\|\cdot\|}(0,1):\ s_0(E_0,x,v) \leq \mathtt{T}.\nonumber
\end{align}	
Assume on the contrary, that
\begin{align}
	\forall\ \mathtt{T} > 0,\ \exists\ x = x(\mathtt{T}) \in \mathcal{M},\ 
	\exists\ v = v(\mathtt{T})\in \overline{\mathcal{C}(x(\mathtt{T}))} \cap 
	\partial B_{\|\cdot\|}(0,1):\ s_0(E_0,x(\mathtt{T}),v(\mathtt{T})) > \mathtt{T}.\nonumber
\end{align}	
Due to compactness, the limits 
$\lim_{\mathtt{T} \to \infty} x(\mathtt{T}) = x_{\ast}$ resp. 
$\lim_{\mathtt{T} \to \infty} v(\mathtt{T}) = v_{\ast}$
lie in $\mathcal{M}$ resp. $\overline{\mathcal{C}(x_{\ast})} \cap \partial B_{\|\cdot\|}(0,1)$. 
In view of the role of $s_0$ in the strict unboundedness statement, our assumption implies
\begin{align}
	\exists\ x_{\ast} \in \mathcal{M},\ \exists\ v_{\ast} \in 
\overline{\mathcal{C}(x_{\ast})} \cap \partial B_{\|\cdot\|}(0,1):\ 
\lim_{t \to \infty} Q(d_{x_{\ast}} \phi^t v_{\ast}) \leq E_0,\nonumber
\end{align}
which clearly yields a contradiction to the strict unboundedness property.
\end{proof}
We introduce the norm
\begin{align}
\|(\delta \xi, \delta \eta)\|_{HT} := \|(\delta \xi, \delta \eta)\|_2 + \|\delta \eta\|_{CW},\nonumber
\end{align}
where
\begin{align}
\|\delta \eta\|_{CW}^2 = \sum_{i=1}^{N-2} \frac{(\delta \eta_{i+1} - \delta \eta_{i})^2}{m_i},\nonumber
\end{align}
is a norm on $\mathbb{R}^{N-1}$ introduced by Cheng and Wojtkowski in \cite[(11)]{ChW91}. It is invariant with respect to the submatrices 
$D_i$, $D_i^T$ of the ball to ball collision map derivatives given in (\ref{dxieta}). 
The norm $\| \cdot \|_2$ refers to the Euclidean norm.

We start with the first part of the proof by investigating how the fixed length of a 
vector changes, when it is subjected to floor or ball to ball collisions. 

\begin{lemma}
There exists a constant $E_1 > 0$, such that for all $i \in \{1,\ldots,N\}$, $x \in \mathcal{M}_{i,1}^+$, 
$(\delta \xi, \delta \eta) \in \overline{\mathcal{C}(x)} \cap \partial B_{\|\cdot\|_{HT}}(0,1)$ and $n \geq 1$, 
we have
\begin{align}
\|d_x \Phi_{0,1}^{n} (\delta \xi, \delta \eta)\|_{HT} \geq E_1.\nonumber
\end{align}
\end{lemma}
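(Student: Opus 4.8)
The plan is to make $d_x\Phi_{0,1}^n$ completely explicit and then reduce the estimate to a statement about a shear that fixes the $\delta\xi$-block. Between two consecutive floor collisions the lowest ball is in free flight with $\dot v_1 = -1$, so its pre-collisional velocity $v_1^-$, and hence the entry $\beta = -2/(m_1 v_1^-)$ of (\ref{alpha}), is the same at each of the $n$ bounces. Since the flow derivative is the identity in $(\xi,\eta)$-coordinates, the chain rule gives
\[
d_x\Phi_{0,1}^n(\delta\xi,\delta\eta) = \left(\delta\xi,\ \delta\eta + n\beta\,\delta\xi_1\, e_1\right),
\]
where $e_1$ is the first standard basis vector. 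The key structural point is that this map fixes the entire $\delta\xi$-block together with $\delta\eta_2,\dots,\delta\eta_{N-1}$; only the single coordinate $\delta\eta_1$ is changed.

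First I would dispose of the easy case. Fix a threshold $\delta_0 > 0$. If $\|\delta\xi\|_2 \ge \delta_0$, then since $\delta\xi$ is preserved we get $\|d_x\Phi_{0,1}^n(\delta\xi,\delta\eta)\|_{HT} \ge \|d_x\Phi_{0,1}^n(\delta\xi,\delta\eta)\|_2 \ge \|\delta\xi\|_2 \ge \delta_0$. All the content lies in the complementary regime $\|\delta\xi\|_2 < \delta_0$. Here I would first observe that $\|\delta\eta\|_{CW}$ is dominated by a constant multiple of $\|\delta\eta\|_2$, so the normalization $\|(\delta\xi,\delta\eta)\|_{HT}=1$ forces $\|(\delta\xi,\delta\eta)\|_2 \ge c_0$ for some universal $c_0 > 0$; consequently a definite amount of Euclidean mass must sit in $\delta\eta$. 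Because every component of $\delta\eta$ except $\delta\eta_1$ is preserved, the image can be short only if almost all of this mass is concentrated in $\delta\eta_1$ and the shear almost annihilates it, i.e. $\delta\eta_1 + n\beta\,\delta\xi_1 \approx 0$ while $\delta\eta_2,\dots,\delta\eta_{N-1}\approx 0$.

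This is where the cone hypothesis $(\delta\xi,\delta\eta)\in\overline{\mathcal{C}(x)}$ is indispensable and is the crux. Annihilating $\delta\eta_1$ forces $\delta\xi_1$ and $\delta\eta_1$ to have opposite signs (as $n\beta>0$), so $\delta\xi_1\delta\eta_1<0$; the cone condition $Q(\delta\xi,\delta\eta)=\sum_{i=1}^{N-1}\delta\xi_i\delta\eta_i \ge 0$ then forces $\sum_{i=2}^{N-1}\delta\xi_i\delta\eta_i \ge |\delta\xi_1\delta\eta_1|$, and with Cauchy--Schwarz this yields $\|(\delta\xi_2,\dots,\delta\xi_{N-1})\|_2\,\|(\delta\eta_2,\dots,\delta\eta_{N-1})\|_2 \ge |\delta\xi_1||\delta\eta_1|$, so the preserved tail cannot vanish once $\delta\eta_1$ carries mass. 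I would combine this with the preserved Euclidean mass to bound $\|(\delta\xi,\delta\eta')\|_2$ from below, and convert ``arbitrarily small'' into a genuine contradiction by a compactness argument in the spirit of Lemma \ref{taubound2}: assuming the claim fails, choose $x_k\in\mathcal{M}_{i,1}^+$, unit vectors $v_k\in\overline{\mathcal{C}(x_k)}$ and integers $n_k\ge1$ with $\|d_{x_k}\Phi_{0,1}^{n_k}v_k\|_{HT}\to 0$, extract limits $x_k\to x_\ast$, $v_k\to v_\ast\in\overline{\mathcal{C}(x_\ast)}$ with $\|v_\ast\|_{HT}=1$, and run the preservation-plus-cone bookkeeping on the limit.

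The hard part, which I expect to require the most care, is that the shear strength $n\beta$ is not uniformly bounded: arbitrarily many floor collisions can occur in finite time and $\beta\to+\infty$ as $v_1^-\to0$, so one cannot simply invoke continuity of an invertible limiting map. The whole difficulty is thus to show that the cone condition alone, through the coupling it imposes between the annihilated coordinate $\delta\eta_1$ and the preserved tail, leaves a definite, $n$-independent residue in the preserved block as measured by $\|\cdot\|_{HT}$, and to verify that this residue is bounded below uniformly in $x$ and in $n$ despite $n\beta\to\infty$.
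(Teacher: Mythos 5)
Your setup coincides with the paper's proof of this lemma step for step: the same explicit form $d_x\Phi_{0,1}^n(\delta\xi,\delta\eta)=(\delta\xi,\,nB\delta\xi+\delta\eta)$, the same dichotomy on $\|\delta\xi\|_2$ (the paper's (\ref{claim})), the same observation that in the bad regime the unit normalization forces the mass onto $\delta\eta_1$ (the paper's (\ref{N2})), the same sign analysis forcing $\delta\xi_1\delta\eta_1<0$, and the same appeal to $Q(\delta\xi,\delta\eta)\ge 0$ to couple $\delta\xi_1\delta\eta_1$ to the tail $\sum_{i\ge2}\delta\xi_i\delta\eta_i$. The problem is that you stop exactly where the proof has to be finished. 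The inequality you extract, $\|(\delta\xi_2,\dots,\delta\xi_{N-1})\|_2\,\|(\delta\eta_2,\dots,\delta\eta_{N-1})\|_2\ge|\delta\xi_1|\,|\delta\eta_1|$, has a right-hand side that degenerates as $\delta\xi_1\to 0$, and that is precisely the dangerous regime: the near-cancellation $n\beta\,\delta\xi_1+\delta\eta_1\approx 0$ can be arranged for arbitrarily small $|\delta\xi_1|$ by taking $n$ large, since $n$ ranges over all of $\mathbb{N}$. So ``the preserved tail cannot vanish'' does not, as written, yield an $n$-independent constant $E_1$; it yields a bound of order $\sqrt{|\delta\xi_1|}$. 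The compactness scheme you sketch cannot repair this, because the maps $d_{x_k}\Phi_{0,1}^{n_k}$ have no limit when $n_k\to\infty$ --- a point you concede yourself. You have correctly isolated the crux (a definite residue in the preserved block, uniform in $n$ and $x$), but your text ends by \emph{naming} that difficulty rather than resolving it, so the lemma is not proved.

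For comparison, the paper closes this step by a quantitative version of your coupling: from (\ref{contradiction2}) and (\ref{N2}) it writes $0>\delta\xi_1\delta\eta_1\ge-(\delta\xi_2\delta\eta_2+\dots+\delta\xi_{N-1}\delta\eta_{N-1})$ and argues that the right-hand side is $\mathcal{O}(E_1^2)$ while the term $\delta\xi_1\delta\eta_1$ is of linear order $\mathcal{O}(E_1)$, concluding that for $E_1$ small the vector cannot lie in $\overline{\mathcal{C}(x)}$. Note that your own bookkeeping shows exactly why this final comparison is the delicate point: $|\delta\xi_1|<E_1$ is only an upper bound, so one must still argue that $|\delta\xi_1|$ cannot be so small (relative to $E_1^2$) that the cone inequality survives while the shear still cancels $\delta\eta_1$. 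Whatever form that argument takes, it is the one ingredient your proposal does not supply.
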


\begin{proof}
Using the definition of the floor derivative $d \Phi_{0,1}$ (\ref{dxieta}), we estimate
\begin{align}
\|d_x\Phi_{0,1}^n(\delta \xi, \delta \eta)\|_{HT} \geq \|(\delta \xi, nB\delta \xi + \delta \eta)\|_2 
\geq \max \{\|\delta \xi\|_2, \|nB\delta \xi + \delta \eta\|_2\}.\nonumber
\end{align}
We will be proving the following statement: There exists a constant $E_1 > 0$, such that for all $i \in \{1,
\ldots,N\}$, $x \in \mathcal{M}_{i,1}^+$, 
$(\delta \xi, \delta \eta) \in \overline{\mathcal{C}(x)} \cap \partial B_{\|\cdot\|_{HT}}(0,1)$ and $n \geq 1$, 
we have
\begin{align}\label{claim}
\|\delta \xi\|_2 \geq E_1\quad \vee \quad \|nB\delta \xi + \delta \eta\|_2 \geq E_1.
\end{align}
Assume on the contrary that the previous statement does not hold, i.e. for every $E_1 > 0$, 
there exists an $i \in \{1,\ldots,N\}$, $x \in \mathcal{M}_{i,1}^+$, 
$(\delta \xi, \delta \eta) \in \overline{\mathcal{C}(x)} \cap \partial B_{\|\cdot\|_{HT}}(0,1)$ and $n \geq 1$, 
such that 
\begin{align}\label{contradiction1}
\|\delta \xi\|_2 < E_1\quad \wedge \quad \|nB\delta \xi + \delta \eta\|_2 < E_1.
\end{align}
For $E_1$ sufficiently small, conditions (\ref{contradiction1}) imply
\begin{align}\label{contradiction2}
\lvert \delta \xi_1 \rvert, \lvert \delta \xi_2 \rvert, \ldots, \lvert \delta \xi_{N-1} \rvert < E_1,\quad
\lvert n\beta \delta \xi_1 + \delta \eta_1 \rvert, \lvert \delta \eta_2 \rvert,\ldots, \lvert \delta \eta_{N-1} \rvert < E_1.
\end{align}
Since $(\delta \xi, \delta \eta) \in \overline{\mathcal{C}(x)} \cap \partial B_{\|\cdot\|_{HT}}(0,1)$, (\ref{contradiction2}) implies that the length of the $\delta \eta$ component of the vector 
$(\delta \xi, \delta \eta)$ is concentrated on the first entry $\delta \eta_1$, i.e. there exists a constant $E_3 = E_3(E_1) > 0$, such that
\begin{align}\label{N2}
\lvert \delta \eta_1 \rvert \geq E_3.
\end{align}
If
\begin{align}
\delta \xi_1, \delta \eta_1 > 0,\quad \delta \xi_1, \delta \eta_1 < 0\,\quad \text{or}\quad 
\delta \xi_1 = 0,\nonumber
\end{align}
then
\begin{align}
\lvert n\beta \delta \xi_1 + \delta \eta_1 \rvert = 
n\beta \lvert \delta \xi_1 \rvert + \lvert \delta \eta_1 \rvert \geq 
\lvert \delta \eta_1 \rvert \geq E_3,\nonumber 
\end{align}
which contradicts (\ref{contradiction2}). Assume therefore that $\delta \xi_1 \delta \eta_1 < 0$.

Since the vector lies in the cone $\overline{\mathcal{C}(x)} \cap \partial B_{\|\cdot\|_{HT}}(0,1)$, $\sum_{i = 1}^{N-1} \delta \xi_i \delta \eta_i \geq 0$ 
must be satisfied. Combining this with the above yields
\begin{align}\label{secondcoming}
0 > \delta \xi_1 \delta \eta_1 \geq -(\delta \xi_2 \delta \eta_2 + \ldots + \delta \xi_{N-1} \delta \eta_{N-1}).
\end{align}
Due to (\ref{contradiction2}), (\ref{N2})
the second inequality in (\ref{secondcoming}) is violated, because the right hand side is of quadratic order 
$\mathcal{O}(E_1^2)$, while the $\delta \xi_1 \delta \eta_1$ term is of linear order $\mathcal{O}(E_1)$. 
Hence, for sufficiently small $E_1$, this implies 
$(\delta \xi, \delta \eta) \notin \overline{\mathcal{C}(x)} \cap \partial B_{\|\cdot\|_{HT}}(0,1)$, which contradicts 
assumption (\ref{contradiction1}) and yields our claim (\ref{claim}).
\end{proof}

A uniform lower bound for multiple ball to ball collisions can only be 
established for a fixed number of ball to ball collisions. Let $v_{\max} > 0$ be the largest possible velocity a ball can 
reach within the compact energy surface.
\begin{lemma}
For a fixed $\mathtt{n}_0 \geq 1$, let 
$d_x T^{\mathtt{n}_0}$ be a product of $\mathtt{n}_0$ ball to ball collision derivatives. Then, there exists a constant $E_2 > 0$, such that for all $m \geq 1$, 
$x \in \mathcal{M} \setminus (\bigcup_{i = 2}^{N}\mathcal{M}_{i,1}^+ \cup \mathcal{M}_{1,1}^{m,+})$, 
$(\delta \xi, \delta \eta) \in \overline{\mathcal{C}(x)} \cap \partial B_{\|\cdot\|_{HT}}(0,1)$, 
$n \leq \mathtt{n}_0$, we have
\begin{align}
\|d_x T^{n} (\delta \xi, \delta \eta)\|_{HT} \geq E_2.\nonumber
\end{align}
\end{lemma}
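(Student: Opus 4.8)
The plan is to exploit the block-triangular form of the ball to ball collision derivatives in (\ref{dxieta}) and the fact that a product of a \emph{fixed} number $\mathtt n_0$ of them has a uniformly bounded inverse. First I would observe that the domain restriction guarantees that no floor collision occurs among the first $\mathtt n_0$ iterates, so that for every $n \leq \mathtt n_0$ the map $d_x T^n$ is genuinely a composition of matrices of the form $\begin{pmatrix} D_i & F_i\\ 0 & D_i^T\end{pmatrix}$. Since this block-triangular shape is preserved under composition, one may write
\begin{align}
d_x T^n = \begin{pmatrix} P & G\\ 0 & P^T\end{pmatrix},\nonumber
\end{align}
where $P$ is a product of at most $\mathtt n_0$ matrices from $\{D_1,\ldots,D_{N-1}\}$ and $G$ collects the off-diagonal contributions. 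Two uniformities are decisive. First, each $D_i$ depends only on the masses through $\gamma_i$, so $P$ ranges over a \emph{finite} set of matrices; moreover $\det D_i = -1$, hence every such $P$ is invertible and $\max_P \|P^{\pm 1}\|_2 < \infty$. Second, by (\ref{alpha}) the entries $\alpha_i$ are controlled by the velocity differences $v_i^- - v_{i+1}^- \leq 2v_{\max}$ on the compact energy surface, so each $\|F_i\|_2$ is bounded; as $G$ is a sum of at most $\mathtt n_0$ products of such bounded matrices, $\|G\|_2$ is bounded by a constant depending only on $\mathtt n_0$ and the masses.

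The key step is then that $d_x T^n$ has a uniformly bounded inverse. Indeed,
\begin{align}
(d_x T^n)^{-1} = \begin{pmatrix} P^{-1} & -P^{-1} G (P^T)^{-1}\\ 0 & (P^T)^{-1}\end{pmatrix},\nonumber
\end{align}
and each block is bounded by the two observations above, so there is a constant $K = K(\mathtt n_0) > 0$, independent of $m$, of the configuration $x$, and of the particular collision sequence, with $\|(d_x T^n)^{-1}\|_2 \leq K$. Consequently
\begin{align}
\|d_x T^n(\delta \xi, \delta \eta)\|_2 \geq \frac{1}{K}\,\|(\delta \xi, \delta \eta)\|_2\nonumber
\end{align}
for every admissible $x$, every $n \leq \mathtt n_0$, and every vector.

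To finish I would pass from $\|\cdot\|_2$ to $\|\cdot\|_{HT}$. Since $\|\cdot\|_{HT} \geq \|\cdot\|_2$ and, by equivalence of norms on $\mathbb{R}^{N-1}$, there is $C > 0$ with $\|\delta \eta\|_{CW} \leq C\|(\delta \xi, \delta \eta)\|_2$, the normalisation $\|(\delta \xi, \delta \eta)\|_{HT} = \|(\delta \xi, \delta \eta)\|_2 + \|\delta \eta\|_{CW} = 1$ forces $\|(\delta \xi, \delta \eta)\|_2 \geq 1/(1+C)$. Combining,
\begin{align}
\|d_x T^n(\delta \xi, \delta \eta)\|_{HT} \geq \|d_x T^n(\delta \xi, \delta \eta)\|_2 \geq \frac{1}{K}\|(\delta \xi, \delta \eta)\|_2 \geq \frac{1}{K(1+C)} =: E_2 > 0,\nonumber
\end{align}
which is the claim. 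Note that neither the cone condition $Q \geq 0$ nor the $D_i^T$-invariance of $\|\cdot\|_{CW}$ is actually needed for this lemma; they only become essential in the unbounded floor-collision regime.

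The main obstacle, and the reason this argument succeeds for ball to ball collisions but not for the floor map $d\Phi_{0,1}$, is securing the uniformity of $K$. For the floor map the lower-triangular block grows like $nB$ and the inverse norm is unbounded as the number of floor returns increases, which is precisely why the preceding lemma treats $d\Phi_{0,1}$ separately and why the sets $\bigcup_{i=2}^N \mathcal{M}_{i,1}^+$ and $\mathcal{M}_{1,1}^{m,+}$ are excluded here. In the ball to ball case the off-diagonal block $G$ likewise grows with the number of collisions, but fixing $\mathtt n_0$ caps both it and the diagonal block to a bounded range; verifying these two uniform bounds (the finite set for $P$ and the $v_{\max}$-driven bound for $G$) is the one place where care is required, the remainder being elementary norm comparisons.
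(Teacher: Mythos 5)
Your proof is correct, and it reaches the conclusion by a genuinely different route than the paper. The paper's proof splits into cases according to the size of $\|\delta\eta\|_{CW}$: when $\|\delta\eta\|_{CW}$ is bounded below it concludes at once from the invariance of the Cheng--Wojtkowski seminorm under the diagonal blocks $D_i^T$, and otherwise it uses the cone condition together with the unit normalisation to extract a lower bound on $\|\delta\xi\|_{CW}$, bounds the off-diagonal block (your $G$, the paper's $U_n$) by the telescoping estimate $\|U_n\|_{CW}\leq\|F_{i_1}\|_{CW}+\ldots+\|F_{i_n}\|_{CW}<\mathtt{n}_0\,2v_{\max}E_6$, and then chooses the case-splitting threshold so small that $\|D_n\delta\xi+U_n\delta\eta\|$ stays positive. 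You replace this case analysis by the single observation that a product of at most $\mathtt{n}_0$ block-triangular collision derivatives is uniformly invertible, since the diagonal blocks range over a finite family of invertible matrices ($\det D_i=-1$) depending only on the masses, while the off-diagonal block is bounded through $\alpha_i$ and $v_{\max}$ on the compact energy surface. Your route buys a strictly stronger statement --- a uniform lower bound for every tangent vector, with no appeal to the cone condition or to the $D_i$-invariance of $\|\cdot\|_{CW}$ --- and it quietly sidesteps a delicate point in the paper's second case: $\|\cdot\|_{CW}$ is only a seminorm (it vanishes on constant vectors), so deducing a lower bound on $\|\delta\xi\|_{CW}$ from the normalisation and the cone condition requires more care than the paper's one-line assertion, whereas uniform invertibility of $d_xT^n$ needs nothing of the sort. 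Two harmless imprecisions in your write-up: the lower-right block of the product is $D_{i_n}^T\cdots D_{i_1}^T$, the transpose of the $D_i$'s composed in the reverse order, not $P^T$ for the same $P=D_{i_n}\cdots D_{i_1}$ (it is still a product of at most $\mathtt{n}_0$ matrices from a finite invertible family, which is all your argument uses); and the exclusion of $\bigcup_{i=2}^{N}\mathcal{M}_{i,1}^+\cup\mathcal{M}_{1,1}^{m,+}$ by itself only constrains the next collision, so what you are really invoking is the lemma's explicit hypothesis that $d_xT^{\mathtt{n}_0}$ is a product of ball-to-ball collision derivatives.
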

\begin{proof}
Using the ball to ball collision map derivatives (\ref{dxieta}), a first estimate gives,
\begin{align}
\|d_x T^n(\delta \xi, \delta \eta)\|_{HT} \geq 
\max \{\|D_n\delta \xi + U_n\delta \eta\|_2, \|D_n^T\delta \eta\|_{CW}\}.\nonumber
\end{align}
If $E_4 > 0$ and $\|\delta \eta \|_{CW} \geq E_4$, then the invariance with respect to $D_n^T$ 
of the norm $\|\cdot\|_{CW}$ immediately yields, that the vector is bounded from below. 

Assume therefore that $\|\delta \eta \|_{CW} < E_4$, for a value $E_4$, which will be 
chosen sufficiently small. Since 
$(\delta \xi, \delta \eta) \in \overline{\mathcal{C}(x)} \cap \partial B_{\|\cdot\|_{HT}}(0,1)$, 
there exists a constant $E_5 = E_5(E_4) > 0$, such that $\|\delta \xi\|_{CW} \geq E_5$. It is clear, that 
if $E_4$ decreases, $E_5$ increases. 

The matrix product $U_n$ is recursively defined by
\begin{align}
U_1 = F_{i_1},\ U_n = D_{i_n}U_{n-1} + F_{i_n}D_{n-1}^T,\nonumber
\end{align}
for some $i_1, \ldots, i_n \in \{1,\ldots,N\}$ depending on $x$ and $D_n = D_{i_n} \ldots D_{i_1}$. 
Repeatedly using the triangle inequality and the $D_i$-invariance of the $\|\cdot\|_{CW}$-norm, we estimate
\begin{align}
\|U_n\|_{CW} &\leq \|D_{i_n}\|_{CW}\|U_{n-1}\|_{CW} + \|F_{i_n}\|_{CW}\|D_{n-1}^T\|_{CW}\nonumber\\
&= \|U_{n-1}\|_{CW} + \|F_{i_n}\|_{CW}\nonumber\\
&\leq \|F_{i_1}\|_{CW} + \ldots + \|F_{i_n}\|_{CW}.\nonumber
\end{align} 
Remembering the definition of $F_{i_k}$ (\ref{dxieta}) and $\alpha_k$ (\ref{alpha}), we obtain the upper bound
\begin{align}
\|U_{n}\|_{CW} < n2v_{\max}\max_{i \in \{1,\ldots,N-1\}}\frac{2m_im_{i+1}(m_i - m_{i+1})}{(m_i + m_{i+1})^2}.\nonumber
\end{align}
We abbreviate
\begin{align}
	E_6 := \max_{i \in \{1,\ldots,N-1\}}\frac{2m_im_{i+1}(m_i - m_{i+1})}{(m_i + m_{i+1})^2},\nonumber
\end{align} 
and estimate
\begin{align}
\|D_n \delta \xi + U_n \delta \eta\| &\geq \|\delta \xi\|_2 - \|U_n\| \|\delta \eta\|_2 \nonumber\\
&\geq c_1\|\delta \xi\|_{CW} - c_2\|U_n\|_{CW} \|\delta \eta\|_{CW} \nonumber\\
&\geq c_1E_5 - c_2n 2 v_{\max}E_6E_4 \nonumber\\
&\geq c_1E_5 - c_2\mathtt{n}_0 2 v_{\max}E_6E_4,\nonumber
\end{align}
where $c_1, c_2 > 0$ are constants derived from the equivalence of norms. 

For the original statement to hold, we choose $E_4$ sufficiently small and obtain a lower bound $E_2 > 0$ for the last inequality.
\end{proof}

To conclude the first step of the non-contraction property it remains to prove
\begin{lemma}\label{collisionbound}
Let $T > 0$.  The number of ball to ball collisions in $[0,T]$ is bounded from above by a constant, which depends only on the 
length of the interval and the energy of the system.
\end{lemma}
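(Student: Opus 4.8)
The plan is to fix the energy $c$, which bounds all velocities by $v_{\max}$ and all heights by $c/m_N$, and then to rule out any accumulation of ball to ball collisions inside $[0,T]$. I would first record the structural fact underlying the formulas (\ref{sandwich})--(\ref{sandwichsum12}): between collisions the relative velocity $w_i := v_i - v_{i+1}$ of a pair $(i,i+1)$ is constant, because the linear field accelerates both balls equally. Hence the relative position $q_{i+1}-q_i$ is piecewise linear, its slope changing only at a collision of ball $i$ or $i+1$ with a neighbour (for $i=1$, with the floor), so two successive $(i,i+1)$ collisions always enclose such a neighbouring collision. Equivalently, the ball to ball collision dynamics depends only on the $w_i$ and is therefore identical to the gravity free one; in particular the classical finiteness theorem of Galperin, that a one dimensional system of elastically colliding particles with strictly decreasing masses undergoes at most a bounded number (depending only on the masses) of mutual collisions, applies to every sub-cluster of consecutive balls that does not involve the floor.

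Next I would argue by contradiction. If the assertion fails, then along a sequence of phase points on the energy surface the number of ball to ball collisions in a window of length $T$ tends to infinity; by compactness of $\mathcal{M}$ and of $[0,T]$ I would pass to a limiting orbit along which ball to ball collisions accumulate at some $t^\ast \in [0,T]$. Only finitely many pairs occur, so some pair accumulates at $t^\ast$, and by the slope observation the neighbouring collisions that reverse its relative velocity accumulate there too; iterating, the accumulation is carried by a maximal cluster of consecutive balls whose heights collapse to a common value as $t \to t^{\ast-}$. If this cluster does not reach the floor, its internal dynamics is gravity free and Galperin's bound forbids infinitely many collisions, a contradiction. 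Therefore the accumulating cluster must contain ball $1$, and at least one $(0,1)$ collision sits between consecutive $(1,2)$ collisions of the accumulation, so that the square root term in (\ref{sandwichsum12}) is present.

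The final step is to re-run the energy estimate from the proof of Theorem \ref{heart}. With the floor term present, an accumulation of $(1,2)$ collisions propagates through (\ref{sandwichsumapplied})--(\ref{implication3}) and drives every enclosed pre-collisional velocity difference and every height of the cluster to zero, whence $H \to 0$ at $t^\ast$; this contradicts $c>0$, and equally contradicts the uniform lower bound $C$ of Theorem \ref{heart}. No accumulation is possible, and a count of the resulting isolated collisions, which, having approach speeds bounded below by $C$, are separated by definite times, together with the per-cluster Galperin bound, yields an explicit constant in terms of $T$, $v_{\max}$ and $C$. The main obstacle is exactly this floor analysis: because arbitrarily many $(0,1)$ collisions can occur in finite time \cite[Remark 10.3]{HT19}, the floor is the one ingredient not covered by Galperin, and only the non-degeneracy $C$ supplied by Theorem \ref{heart} prevents the floor from driving infinitely many $(1,2)$ collisions; a further technical point, handled as in the last paragraphs of the proof of Theorem \ref{heart}, is the re-partitioning needed when the limiting orbit meets a singular (triple) collision and branches.
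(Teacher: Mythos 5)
Your overall strategy shares two key ingredients with the paper's proof: the reduction of floor-free clusters to the classical finiteness results of Galperin/Vaserstein (the paper cites \cite{G78,G81,V79} for exactly this), and the use of Theorem \ref{heart} to control the floor's contribution, with a compactness argument supplying uniformity in the phase point. Where you diverge is in the mechanism of the contradiction: you extract a limiting orbit, locate an accumulation time $t^\ast$, and drive $H \to 0$ there via the energy estimate from the proof of Theorem \ref{heart}; the paper instead runs a \emph{velocity budget}. It expands a post-collisional velocity $v_{i_1}^+$ backwards through the entire collision sequence in $[0,T]$, so that every ball to ball collision deposits a non-negative term $\gamma_{i_k}(v_{i_k}^- - v_{i_k+1}^-)$ into a sum that is bounded by $v_{\max}$ plus a time term bounded by $T$; since Theorem \ref{heart} forces infinitely many of these deposits to stay bounded away from zero, infinitely many collisions would blow the budget. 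The paper's route avoids having to make sense of a ``limiting orbit with an accumulation point'' (which is delicate near singularities, as you note), and it delivers the quantitative dependence on $T$ and $c$ directly; uniformity over $x \in \mathcal{M}$ is then obtained by a separate compactness argument on the partial sums $a_{c(n)}(x)$ rather than on the orbits themselves.

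There is, however, a genuine gap in your final counting step. You claim that once accumulation is excluded, the remaining collisions, ``having approach speeds bounded below by $C$, are separated by definite times,'' and you count them by dividing $T$ by that separation. Theorem \ref{heart} does \emph{not} give the lower bound $C$ for every ball to ball collision: it only produces a subsequence of collision times with that property, and the Remark immediately following it explicitly warns that other subsequences may have pre-collisional velocity differences tending to zero. Moreover, even for a single pair colliding with approach speed $\geq C$, the next collision of that same pair need not occur a definite time later: a neighbouring collision can occur arbitrarily soon afterwards, re-reverse the relative velocity while the gap $q_{i+1}-q_i$ is still only of order $C\varepsilon$, and close it again in time of order $C\varepsilon/v_{\max}$. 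So neither ``all approach speeds $\geq C$'' nor ``definite separation'' holds, and the explicit constant you announce does not follow from the argument as stated. To repair this you would need to replace the time-separation count by something like the paper's velocity budget, in which each collision is charged a non-negative amount and only a non-vanishing subsequence of charges is needed to force divergence; as written, your proof establishes (modulo the limiting-orbit subtleties) that no single orbit has infinitely many ball to ball collisions in $[0,T]$, but not the uniform bound the lemma asserts.
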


\begin{proof}
We know from \cite{G78,G81,V79}, that the number of ball to ball collisions in $[0,T]$, without any floor interaction, is 
bounded from above. Assume therefore that we have arbitrarily many ball to ball collisions and floor interactions in $[0,T]$. 
Let $i_k \in \{0,\ldots,N-1\}$, $k \in \mathbb{N}$, and $(i_k, i_k + 1)$ be the aforementioned diverging collision sequence in 
decreasing order, i.e. $(i_k, i_k + 1)$ happens prior to $(i_{k-1}, i_{k-1} + 1)$. Additionally, we let each $(i_k, i_k + 1)$ 
happen at time $t_{(i_k, i_k + 1)}$. Due to the energy restriction it is clear that 
$v_{\max} > \lvert v_{i_1}^+(t_{(i_1, i_1 + 1)}) \rvert$. Using the formulas of the velocity time evolution (\ref{equations}) and elastic collisions (\ref{collisionqp}), (\ref{v1p1}), we expand $v_{i_1}^+(t_{(i_1, i_1 + 1)})$ $n$-times 
backward and obtain
\begin{align}\label{maxineq}
v_{\max} &> \lvert v_{i_1}^+(t_{(i_1, i_1 + 1)}) \rvert \nonumber\\
&= \lvert -(t_{(i_1, i_1 + 1)} - t_{(i_n, i_n + 1)}) + v_{i_n}^+(t_{(i_n, i_n + 1)}) + 
\sum_{k=1}^{c(n)} \gamma_{i_k} (v_{i_k}^- - v_{i_k + 1}^-) \rvert.
\end{align}
The positive integer function $c(n)$ counts how many ball to ball collisions happened within the $n$-backward iterations. 
Since we  also consider floor collisions $c(n) < n$, for $n$ large enough. Observe that each ball to ball collision adds a 
velocity difference term to the positive sum in (\ref{maxineq}). Hence, since we assume to have arbitrarily many ball to ball 
collisions, $\lim_{n \to \infty} c(n) = \infty$. Letting $n$ go to infinity in (\ref{maxineq}), we first obtain that 
$\lim_{n \to \infty} -(t_{(i_1, i_1 + 1)} - t_{(i_n, i_n + 1)})$ is bounded since $[0,T]$ is. Second, due to 
Theorem \ref{heart}, the sequence $(\gamma_{i_k} (v_{i_k}^- - v_{i_k + 1}^-))_{k\in\mathbb{N}}$ does not converge to 
zero and, thus, $\lim_{n \to \infty} \sum_{k=1}^{c(n)} \gamma_{i_k} (v_{i_k}^- - v_{i_k + 1}^-) = \infty$, which 
results in the contradiction $v_{\max} > \infty$.

We want to supplement the details for the reader, that the sum will be large enough for (\ref{maxineq}) to be violated 
after a uniform number of summations. The proof of this fact is similar to the proof of Lemma \ref{taubound2}. Abbreviate 
\begin{align}
a_{c(n)}(x) := \sum_{k=1}^{c(n)} \gamma_{i_k} (v_{i_k}^-(t_{(i_k, i_k + 1)}) - v_{i_k + 1}^-(t_{(i_k, i_k + 1)})).\nonumber
\end{align}
The divergence of $a_{c(n)}(x)$ is equivalent to
\begin{align}\label{condiv}
\forall\ K > 0,\ \forall\ x \in \mathcal{M},\ \exists\ M = M(K,x) \in \mathbb{N}:\ a_{c(n)}(x) > K,\ \forall\ n \geq M(K,x).
\end{align}
Due to the energy $c > 0$ there is a value $K_0 = K_0(c) > 0$ of $a_{c(n)}$ for which inequality (\ref{maxineq}) is 
violated. Observe that $K_0(c)$ additionally depends on the term of opposite sign  
$-(t_{(i_1, i_1 + 1)} - t_{(i_n, i_n + 1)})$ and, thus, on the length of $[0,T]$, hence, we have the depence $K_0 = K_0(c,T)$. 

We want to prove
\begin{align}
\forall\ c > 0,\ \forall\ T > 0,\ \exists\ M_0 = M_0(c,T) \in \mathbb{N},\ \forall\ x \in \mathcal{M}:\ 
M(K_0, x) < M_0(c,T).\nonumber
\end{align}
Assume on the contrary, that this does not hold, i.e.
\begin{align}
\exists\ c > 0,\ \exists\ T > 0,\ \forall\ M_0 \in \mathbb{N},\ \exists\ x = x(M_0):\ 
M(K_0, x(M_0)) \geq M_0.\nonumber
\end{align}
Since $\mathcal{M}$ is compact, the limit $\lim_{M_0 \to \infty} x(M_0) = x_{\ast}$ lies in $\mathcal{M}$. For this 
$x_{\ast}$, we obtain from (\ref{condiv})
\begin{align}
\lim_{n \to \infty} a_{c(n)}(x_{\ast}) \leq  K_0,\nonumber
\end{align}
which contradicts the divergence of $a_{c(n)}(x_{\ast})$. Therefore, the number of ball to ball collisions in $[0,T]$ are bounded by a constant $M_0(c,T)$, which depends only on the length $T$ of $[0,T]$ and  the energy $c > 0$ of the system.
\end{proof}
Combining the last three lemmas yields
\begin{corollary}
Let $T > 0$ and $c > 0$ be the energy of the system. Then, there exists a constant $\zeta_1 = \zeta_1(T,c) > 0$, such that 
the non-contraction property holds for every finite time interval $[0,T]$, i.e.
\begin{align}
\exists\ \zeta_1 > 0,\ \forall\ t \leq T,\ \forall\ x \in \mathcal{M} \setminus \mathcal{S}_t^+,\ \forall\ v \in 
\overline{\mathcal{C}(x)} \cap \partial B_{\|\cdot\|_{HT}}(0,1):\ \|d_x \phi^{t} v\| \geq \zeta_1.\nonumber
\end{align}
\end{corollary}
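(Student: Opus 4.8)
The plan is to realize $d_x\phi^t$ as an ordered product of the collision-map derivatives encountered along the orbit in $[0,t]$, and then to bound its least expansion on the cone from below by a product of the per-factor bounds supplied by the three preceding lemmas. Since the flow derivative equals the identity between collisions, for $x \in \mathcal{M}\setminus\mathcal{S}_t^+$ we may write
\[
d_x\phi^t \;=\; G_r\,\Phi^{(r)}\,G_{r-1}\,\Phi^{(r-1)}\cdots \Phi^{(1)}\,G_0,
\]
where each $G_j = d\Phi_{0,1}^{\,n_j}$ is a (possibly empty, $n_j\ge 0$) maximal block of consecutive floor-collision derivatives and each $\Phi^{(k)}$ is a single ball-to-ball derivative $d\Phi_{i_k,i_k+1}$, evaluated at the appropriate phase point, with the rightmost factor acting first. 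Grouping the floor collisions into such blocks is the key bookkeeping step: between any two consecutive ball-to-ball collisions sits exactly one (possibly empty) floor block, so that if $r$ is the number of ball-to-ball factors, then the number of floor blocks is $r+1$, and both are controlled by the number of ball-to-ball collisions alone.

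First I would record the multiplicative mechanism. For a linear map $A$ set $\sigma_{\min}(A) = \inf_{0\ne v\in\overline{\mathcal{C}}}\|Av\|_{HT}/\|v\|_{HT}$. Because $d_xT$ is $Q$-monotone for every $x$, every floor and every ball-to-ball collision derivative maps $\overline{\mathcal{C}}$ into $\overline{\mathcal{C}}$; consequently, for a composition of cone-preserving maps a one-line induction using the invariance of $\overline{\mathcal{C}}$ at each stage gives
\[
\sigma_{\min}(A_k\cdots A_1)\ \ge\ \prod_{j=1}^{k}\sigma_{\min}(A_j).
\]
It therefore suffices to bound each factor below. The first lemma gives $\sigma_{\min}(G_j)\ge E_1$ for every nonempty floor block, \emph{uniformly} in the block length $n_j$, and trivially $\sigma_{\min}(G_j)=1$ for an empty block; applying the second lemma with $\mathtt{n}_0=1$ gives $\sigma_{\min}(\Phi^{(k)})\ge E_2$ for each single ball-to-ball factor, whose input point faces a ball-to-ball collision next and hence lies in that lemma's admissible domain.

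Next I would invoke Lemma \ref{collisionbound}: the number of ball-to-ball collisions in $[0,t]\subseteq[0,T]$ is at most a constant $M_0 = M_0(c,T)$ depending only on $T$ and the energy $c$. Hence $r\le M_0$ and the number of floor blocks is at most $M_0+1$, so the multiplicative bound yields
\[
\sigma_{\min}(d_x\phi^t)\ \ge\ \min\{E_1,1\}^{\,M_0+1}\,\min\{E_2,1\}^{\,M_0}\ =:\ \zeta_1(c,T)\ >\ 0.
\]
Since this lower bound is independent of $x$, of the chosen unit vector in $\overline{\mathcal{C}(x)}$, and of $t\le T$ (the bound $M_0(c,T)$ holds verbatim for every subinterval $[0,t]$), evaluating at vectors of unit $\|\cdot\|_{HT}$-norm gives $\|d_x\phi^t v\|_{HT}\ge\zeta_1$, which is the claim up to the harmless passage to any fixed equivalent norm.

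The main obstacle is conceptual rather than computational: one must avoid a naive factor-by-factor estimate over \emph{all} collisions, because the number of floor collisions in $[0,T]$ is unbounded, and an $E_1^{\#\text{floor collisions}}$ bound would collapse to zero. The two ingredients that defuse this are (i) the first lemma, which bounds an entire floor block below by $E_1$ independently of its length, and (ii) the combinatorial fact that floor blocks are separated by ball-to-ball collisions, whose total number is bounded by Lemma \ref{collisionbound}. I would also take care to verify at each factor the cone-invariance hypothesis underlying the multiplicative estimate, and to check that every single ball-to-ball derivative is evaluated at a phase point not facing a floor collision next, so that the domain restriction of the second lemma is genuinely met.
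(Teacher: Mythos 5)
Your proposal is correct and follows exactly the route the paper intends: the paper gives no written proof of this corollary beyond ``combining the last three lemmas,'' and your decomposition of $d_x\phi^t$ into alternating floor blocks and ball-to-ball factors, chained multiplicatively via cone invariance and capped by the collision bound $M_0(c,T)$ of Lemma \ref{collisionbound}, is precisely the combination being invoked. Your explicit observation that the floor-block lemma's uniformity in $n$ is what prevents the bound from degenerating with the (unbounded) number of floor collisions is the right key point.
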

The corollary applies directly to the interval $[0,\mathtt{T}]$, where $\mathtt{T}$ is the uniform upper bound of $\tau_{E_0}^+$ (\ref{taubound}). We will now conclude the proof of the non-contraction property by proving it for all 
$t > \mathtt{T}$. 

Due to $\langle \delta \xi - \delta \eta, \delta \xi - \delta \eta \rangle \geq 0$, the Euclidean norm $\|\cdot\|_2$ and 
the $Q$-form can be related via
\begin{align}
\| (\delta \xi, \delta \eta) \|_2 \geq \sqrt{2} \sqrt{Q(\delta \xi, \delta \eta)}.\nonumber
\end{align}
Using the $Q$-monotonicity of the derivative and (\ref{tauplus}), (\ref{taubound}), we obtain
\begin{align}
\|d_x \phi^t(\delta \xi, \delta \eta)\|_{HT} \geq \|d_x \phi^t(\delta \xi, \delta \eta)\|_2 &\geq \sqrt{2} \sqrt{Q(d_x \phi^t(\delta \xi, \delta \eta))}\nonumber\\
&\geq \sqrt{2} \sqrt{Q(d_x \phi^{\mathtt{T}}(\delta \xi, \delta \eta))}\nonumber\\
&\geq \sqrt{2E_0}, \forall\ t > \mathtt{T}.\nonumber
\end{align}
This immediately proves the non-contraction property
\begin{corollary}
The non-contraction property formulated in terms of the norm $\|\cdot\|_{HT}$ holds with constant 
$\zeta = \min\{\zeta_1, \sqrt{2E_0}\}$.
\end{corollary}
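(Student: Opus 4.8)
The plan is to assemble the corollary from the two regimes already treated, using the observation that every $t > 0$ satisfies either $t \leq \mathtt{T}$ or $t > \mathtt{T}$, where $\mathtt{T}$ is the uniform upper bound for $\tau_{E_0}^+$ furnished by Lemma~\ref{taubound2}. First I would fix an arbitrary $t > 0$, an arbitrary $x \in \mathcal{M} \setminus \mathcal{S}_t^+$, and an arbitrary unit vector $v \in \overline{\mathcal{C}(x)} \cap \partial B_{\|\cdot\|_{HT}}(0,1)$, and then dichotomize on the size of $t$ relative to $\mathtt{T}$. Since $d_x\phi^t$ is linear and the estimate we seek is homogeneous of degree one in $v$, the reduction to unit vectors noted at the start of Section~\ref{sec:noncon} is harmless and will be undone at the very end.

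For the regime $t \leq \mathtt{T}$, I would simply invoke the preceding corollary with the choice $T = \mathtt{T}$, which already supplies a constant $\zeta_1 = \zeta_1(\mathtt{T}, c) > 0$ with $\|d_x\phi^t v\|_{HT} \geq \zeta_1$. For the regime $t > \mathtt{T}$, I would reproduce the displayed chain preceding the corollary: from $\|d_x\phi^t v\|_{HT} \geq \|d_x\phi^t v\|_2 \geq \sqrt{2}\,\sqrt{Q(d_x\phi^t v)}$ I would apply the $Q$-monotonicity of the collision derivatives — recalling that the flow derivative is the identity between collisions, so that $t \mapsto Q(d_x\phi^t v)$ is non-decreasing and the passage from $t$ down to $\mathtt{T}$ is licit because $x \notin \mathcal{S}_t^+ \supseteq \mathcal{S}_{\mathtt{T}}^+$ keeps the flow non-singular up to time $\mathtt{T}$ — and then use $\mathtt{T} \geq \tau_{E_0}^+(x)$ with the defining property (\ref{tauplus}) of $\tau_{E_0}^+$ to get $Q(d_x\phi^{\mathtt{T}} v) > E_0$, hence $\|d_x\phi^t v\|_{HT} \geq \sqrt{2E_0}$.

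Combining the two cases gives $\|d_x\phi^t v\|_{HT} \geq \min\{\zeta_1, \sqrt{2E_0}\} =: \zeta$ for every admissible $t$, $x$, $v$. Undoing the normalization by homogeneity extends the bound to all $v \in \overline{\mathcal{C}(x)}$, and since the derivative of the flow equals the identity between collisions this flow-formulation is equivalent to the statement for $d_xT^n$; this is exactly condition~\ref{C2}.

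The only genuine content has already been expended in the supporting results: Theorem~\ref{heart} and Lemma~\ref{collisionbound} underwrite the $t \leq \mathtt{T}$ corollary, while Lemma~\ref{taubound2} — and through it assumption~(\nameref{strict}) — underwrites the $t > \mathtt{T}$ bound. Consequently the corollary itself poses no real obstacle; the one point worth verifying is \emph{consistency} across the split, namely that both regimes use the same closed cone, the same norm $\|\cdot\|_{HT}$, and the same exclusion of the singularity set $\mathcal{S}_t^+$, so that the two lower bounds may be legitimately merged into a single uniform $\zeta$ valid for all $t > 0$.
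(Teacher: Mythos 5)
Your proposal is correct and follows essentially the same route as the paper: split on $t \leq \mathtt{T}$ versus $t > \mathtt{T}$, invoke the finite-time corollary for the first regime, and use the chain $\|d_x\phi^t v\|_{HT} \geq \|d_x\phi^t v\|_2 \geq \sqrt{2}\sqrt{Q(d_x\phi^t v)} \geq \sqrt{2}\sqrt{Q(d_x\phi^{\mathtt{T}} v)} \geq \sqrt{2E_0}$ via $Q$-monotonicity and the definition of $\tau_{E_0}^+$ for the second, then take $\zeta = \min\{\zeta_1,\sqrt{2E_0}\}$. Your added remarks on homogeneity and consistency of the cone, norm, and singularity set across the split are sensible but do not change the argument.
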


\section{Ergodicity of a particle falling in a three dimensional wide wedge}\label{sec:wedge}
In \cite{W98}, Wojtkowski investigated the hyperbolicity of dynamical systems, which describe the motion 
of a particle subjected to constant acceleration in a variety of wedges.
We start by recapitulating the necessary prerequisites from \cite{W98,HT20} to prove our results. 
For a thorough introduction to the subject we recommend reading \cite{W98}.

The unrestricted configuration space $\mathbf{N}_q$ (\ref{configurationspace}) of $N$ falling balls
has the form of a wedge. Abbreviating $\textbf{q} = (q_1,\ldots,q_N)$, we can alternatively formulate it as
\begin{align}
W(b_1,\ldots,b_N)  = \Big\{\textbf{q} \in \mathbb{R}^N:\ \textbf{q} = \sum_{i=1}^N d_ib_i,\ d_i \in \mathbb{R}_+^N,\ i \in \{1,\ldots,N\}\Big\},\nonumber
\end{align}
where the set of linearly independent vectors $\{b_1,\ldots,b_N\}$, called \emph{generators}, are given by 
$b_i = (b_{i,k})_{k=1}^{N}$ with $b_{i,1} = \ldots = b_{i,i-1} = 0$, $b_{i,i} = \ldots = b_{i,N} = 1$.
Observe, that for every mass configuration $(m_1,\ldots,m_N)$, the system of falling balls has the same 
unrestricted configuration space. As before, we obtain the dynamics by intersecting the wedge with the energy 
surface $E_c$ (\ref{tangentenergy}).

We introduce the $Q$-isometric coordinate transformation
\begin{align}\label{fbtowedge}
x_i = \sqrt{m_i}q_i,\quad v_i = \frac{p_i}{\sqrt{m_i}}.
\end{align}
The newly obtained unit generators $\{e_1,\ldots,e_N\}$ in these coordinates become 
$\sqrt{M_i}e_i = (0,\ldots,0,\sqrt{m_i},\ldots,\sqrt{m_N})$, 
where $M_i = m_i + \ldots + m_N$. We observe that, up to a scalar multiple, every mass configuration defines a different 
wedge $W(e_1,\ldots,e_N)$, since the angles between the generators now depend on the masses. The inner product given by the kinetic energy in these coordinates is the standard scalar product $\langle \cdot, \cdot \rangle$ in $\mathbb{R}^N$. 
Since $\langle e_i, e_j \rangle = \sqrt{M_j}/\sqrt{M_i}$, it is easy to verify that 
\begin{align}\label{simple}
\begin{array}{lcl}
\langle e_i, e_{i+1} \rangle &>& 0,\ \forall\ i \in \{1,\ldots,N-1\},\\[0.2cm]
\langle e_i, e_j \rangle &=& \prod_{l = i}^{j-1} \langle e_l, e_{l+1} \rangle,\ \forall\ i,j \in \{1,\ldots,N\},\ 
i\neq j.
\end{array}
\end{align}
Wedges satisfying (\ref{simple}) are called \emph{simple} \cite[Proposition 2.3]{W98}.
  
In the three dimensional case, the hitting of the face $W(e_1,e_3)$, resp. $W(e_1,e_2)$ corresponds in the 
physical model to a $(1,2)$ resp. $(2,3)$ collision. The triple collision states are given by the intersection of the 
former faces, which  amounts to the first generator, i.e. $W(e_1,e_3) \cap W(e_1,e_2) = e_1$. 

It was shown in \cite{HT20} that for the mass restriction
\begin{align}\label{special}
2\sqrt{m_1m_3} = \sqrt{m_1 + m_2 + m_3},
\end{align}
the configuration wedge can be unfolded, by continuously reflecting it in the faces $W(e_1,e_2)$ resp. $W(e_2,e_3)$, into 
a \emph{wide} wedge \cite[Definition 6.1]{W98}. This wide wedge consists exactly of six simple 
wedges \cite[Figure 1]{HT20}. This idea is due to Wojtkowski and can be generalized to $N$ dimensions \cite{W16}. 

The triple collision states in the configuration space, which are represented by the first generator $e_1$, disappear in the wide wedge. More precisely, each trajectory which passes through the spot where $e_1$ was has a smooth continuation. Since the triple collision singularity manifold is the only obstacle to proper alignment \cite{LW92,HT20}, the system of a particle falling in the wide wedge, obtained for the special mass configuration (\ref{special}), satisfies the proper alignment condition \ref{C5}. 

For the Chernov-Sinai ansatz \ref{C4} to be valid, we need that the orbit for
$\mu_{\mathcal{S}^{\pm}}$-a.e. $x \in \mathcal{S}^{\pm}$ emerging from the singularity
manifold is strictly unbounded. This certainly holds since strict unboundedness is established for
every orbit in $\mathcal{M}$ \cite[Main Theorem]{HT20}.

As was proven earlier in this work, the non-contraction property \ref{C2} follows directly from 
the validity of the strict unboundedness for every point.

The Lagrangian subspaces $\mathbf{L}_1$, $\mathbf{L}_2$ (\ref{constant}) of the eventually strictly invariant 
cone field $\mathcal{C}$ are both constant in $\mathcal{M}$ and therefore continuous. This verifies 
condition \ref{C3}.

For the regularity of singularity manifolds \ref{C1}, we employ \cite[Lemma 7.7]{LW92}. The aforementioned lemma 
states that if $T:\ \mathcal{M} \setminus \mathcal{S}^+ \to \mathcal{M} \setminus \mathcal{S}^-$ is a 
diffeomorphism, the proper alignment condition \ref{C5} holds and $d_x T$ is $Q$-monotone for every 
$x \in \mathcal{M}$, then the regularity of singularity subsets follows. The last two conditions have 
already been affirmatively addressed. For the first one, observe that we outlined at the end of Section 
\ref{sec:fb}, that $T$ is a symplectomorphism up to and including the regular boundary $\mathcal{R}^+$ 
(\ref{symplectomorphism}).

Since the proper alignment condition \ref{C5} holds and every point is strictly unbounded \cite[Main Theorem]{HT20} 
it follows from Subsection \ref{sec:ab} that the set of sufficient points has measure one and is arcwise connected. 
Thus, the model of a particle falling in a three dimensional wedge is ergodic.

\subsection*{Acknowledgments}
I want to thank Nandor J. Sim\'anyi and Maciej P. Wojtkowski for their consultation and hospitality. 
Additionally my gratitude is expressed towards Marius N. Nkashama and UAB for providing the academic support, during the 
completion of this work in the summer of 2020.

\end{document}